\newtheorem{theorem}{Theorem}
\newtheorem{proposition}{Proposition}
\begin{document}
	
\title{Koopman Operator Family Spectrum \\ for Nonautonomous Systems - Part 1}
\author{Senka Ma\'{c}e\v{s}i\'{c}\footnote{Senka Ma\'{c}e\v{s}i\'{c} is with the Faculty of 		Engineering University of Rijeka, Croatia, senka.macesic@riteh.hr}, 
	    Nelida \v{C}rnjari\'{c}-\v{Z}ic\footnote{Nelida \v{C}rnjari\'{c}-\v{Z}ic is with the Faculty of Engineering University of Rijeka, Croatia, nelida@riteh.hr}, and 
	    Igor Mezi\'{c}\footnote{Igor Mezi\'{c} is with the Faculty of Mechanical Engineering and Mathematics, University of California, Santa Barbara, CA, 93106, USA mezic@engr.ucsb.edu}}

\maketitle

\begin{abstract}
For every non-autonomous system, there is the related family of Koopman operators $\mathcal{K}^{(t,t_0)}$, parameterized by the time pair $(t,t_0)$. In this paper we are investigating the time dependency of the spectral properties of the Koopman operator family in the linear non-autonomous case and we propose an algorithm for computation of its spectrum from observed data only. To build this algorithm we use the concept of the fundamental matrix of linear non-autonomous systems and some specific aspects of Arnoldi-like methods. In particular, we use Arnoldi-like methods on local data stencils, we exploit the information contained in the Krylov subspace projection error, and discover limitations in the application of Arnoldi-like methods to cases with continous time dependency. We present results of this data-driven algorithm on various linear non-autonomous systems, hybrid as well as continuous in time. In all the examples comparison with exact eigenvalues and eigenfunctions shows excellent performance of the proposed algorithm.         
\end{abstract}
\textbf{Keywords:} Koopman operator family, linear non-autonomous systems, data-driven algorithm, hybrid systems, continuous time dependency.

\section{Introduction}

In recent years considerable attention has been paid to the analysis of dynamical systems behavior by analyzing the spectral properties of the associated Koopman operator. The Koopman operator was introduced in \cite{Koopman:1931}, as a composition operator acting on the space of observable functions. The crucial property of this operator is that it is linear on the space of observables.  The renewed interest for Koopman operator starts with the works \cite{mezicandbanaszuk:2004} and \cite{mezic:2005}, where authors studied the problem of decomposing evolution of a field from the perspective of the operator theory. They proved that under certain conditions the flow dynamics can be accurately decomposed into simpler structures that are based on projection to the eigenfunctions of Koopman operator associated with the dynamical evolution of the observables. Due to the linearity of the Koopman operator, this approach is applicable even if the dynamics is nonlinear. Thus, the dynamical evolution of the system can be described by using Koopman mode analysis, which consists of determining the eigenvalues, eigenfunctions and eigenmodes of the Koopman operator, so that the considered dynamical system can be represented by the corresponding Koopman Mode Decomposition (KMD). 
An overview of the spectral properties of Koopman operator and its application to the analysis of the fluid flow is given in \cite{rowleyetal:2009,Mezi2013,BudisicMezic_ApplKoop}.  

A variety of methods for determining the numerical approximation of the KMD has been developed. A general method for computing the Koopman modes, based on the rigorous theoretical results for the generalized Laplace transform, is known under the name Generalized Laplace Analysis (GLA) \cite{Mezi2013,BudisicMezic_ApplKoop}. It reduces to the Wiener's generalized harmonic analysis in the case when all the eigenvalues are on the unit circle \cite{Mauroy2012}. Another method that is closely related to the Koopman operator and is based on its spectral properties  is the Dynamic Mode Decomposition (DMD) method \cite{rowleyetal:2009}. Like GLA, DMD method belongs to the class of data driven algorithms, so that it can be applied to the time series of data, even if the underlying mathematical model is not known. The first DMD method for evaluating the Koopman mode and Koopman eigenvalues was the Arnoldi-like method based on the companion matrix \cite{rowleyetal:2009}. The more stable algorithm using the similar approach was based on the DMD decomposition and was proposed independently and with no relation to Koopman mode analysis by Schmid in \cite{schmid:2010,schmid:2011}. Tu et.al. provide in their paper \cite{Tu2014jcd} several alternative algorithms for evaluating DMD modes and eigenvalues and give comparison between them. They introduced the algorithm known under the name exact DMD. Williams et.al. introduced the extension of the DMD algorithm in \cite{williams2014data}, which they referred to as the Extended Dynamic Mode Decomposition (EDMD). This is an entirely data driven procedure for evaluating leading Koopman eigenfunctions, eigenvalues and modes from a data set of snapshot pairs and a dictionary of observables. Recently, Arbabi and Mezi\'{c} in \cite{ArbabiMezic2016} introduced a further extension of the DMD algorithm, Hankel DMD, based on the use of Hankel matrix instead of the companion matrix, for computation of the Koopman spectrum on single observable. They prove that the eigenvalues and eigenfunctions determined by the proposed Hankel-DMD method converge for the ergodic systems to the true eigenfunctions and eigenvalues of the infinite dimensional Koopman operator. The consequence of their results is that the convergence of the exact DMD \cite{Tu2014jcd} for the ergodic systems is obtained. 
Mentioned numerical algorithms for evaluating the spectral elements of Koopman operator have been successfully used to analyze different dynamical systems and flow configurations \cite{BudisicMezic_ApplKoop,susukiandmezic:2009,susukiandmezic:2012,GlazMezicFonoberovaLoire,
	eisenhoweretal:2010}. 

However, the Koopman operator analysis was almost exclusively applied to the autonomous systems. As far as we know, the Koopman operator framework was firstly extended to the non-autonomous dynamical systems in \cite{suranamezic}. They introduced a rigorous definition of the non-autonomous Koopman eigenvalues, eigenfunctions and modes, which are the building blocks of the non-autonomous Koopman mode decomposition used for describing the dynamic evolution of the flow governed by a non-autonomous system. This extension entails the time dependent eigenfunctions, eigenvalues and modes of the Koopman operator. They successfully applied the introduced extension to the linear periodic and quasi-periodic non-autonomous systems. The study of a non-autonomous dynamical system through the spectral properties of the corresponding Koopman operator is used in \cite{susukiandmezic:2009,susukiandmezic:2012} for analyzing the  power exchange deviation in the European grid disturbance. In these papers, the Arnoldi and Prony method for evaluating the Koopman mode decomposition were used. Recently, in \cite{KutzFuBrunton}, the multi resolution DMD (mrDMD) for decomposing data with multiple time scales has been proposed with the successful application to the non-stationary data.

The intention of this work is to apply the non-autonomous Koopman mode decomposition to linear non-autonomous systems. We use DMD algorithms, originally developed for the autonomous systems, to evaluate the time dependent eigenvalues, eigenfunctions and modes of the Koopman mode decomposition. We explore the limitation of the Arnoldi-like methods for their application to such systems. As a special case we consider the hybrid linear systems and develop a stable algorithm for evaluating the spectral decomposition by using the informations of the subspace projection error. Due to the special structure of the companion matrix used in the Arnoldi-like method, we show that, for the case of time-dependent linear systems, the appropriate choice of observables is necessary for determining the good approximations of non-autonomous Koopman eigenvalues. 

The paper is organized as follows. In Section \ref{sec:LNA-koop} the definition of the Koopman operator family for non-autonomous system is introduced and for the linear non-autonomous dynamical systems the relation with the fundamental matrix of the system is clarified in Theorem \ref{thm:kopp-fmatrix}. In Section \ref{sec:data-algorithm} we point out the issues that arise when Arnoldi-like methods are applied and we specify the error that arises from continuously changing underlying matrix in Theorem \ref{thm:NLA-Aerror}. Then we propose two algorithms that resolve these issues: Algorithm 1 for the hybrid dynamical systems, and Algorithm 2 for the dynamical systems with continuously changing underlying matrix. The advantages of the new algorithms are demonstrated in several numerical examples in Section \ref{sec:results}. 

\section{Koopman operator family of the linear \\ non-autonomous system}\label{sec:LNA-koop}

The linear non-autonomous system is a dynamical system governed by
\begin{equation}\label{eq:LNA-system}
\dot{\mathbf{x}}=\mathbf{A}(t)\mathbf{x}, 
\end{equation} 
where $\mathbf{x}=\mathbf{x}(t)$ is the $n$-dimensional state vector, and $\mathbf{A}=\mathbf{A}(t)$ is a given time-dependent matrix. 

For any dynamical system, the Koopman operator family $\mathcal{K}^{(t,t_0)}$ is defined with its action on the observables $f=f(\mathbf{x})$
\begin{equation}\label{eq:koop}
\mathcal{K}^{(t,t_0)}f(\mathbf{x}(t_0)) = f(\mathbf{x}(t)).
\end{equation} 
As usual in the Koopman operator framework, our main goal is to find non-autonomous Koopman eigenvalues $\lambda^{(t,t_0)}$ and eigenfunctions $\phi^{(t,t_0)}$ \cite{Mezi2013,suranamezic} defined by 
\begin{equation}\label{eq:koop-eigen-def}
\mathcal{K}^{(t,t_0)}\phi^{(t,t_0)} = e^{\lambda^{(t,t_0)}}\phi^{(t,t_0)}.
\end{equation} 


For the linear non-autonomous system (\ref{eq:LNA-system}), the fundamental matrix is defined as the matrix $\mathcal{M}(t,t_0)$ whose $i^{th}$ column is the solution of the equation (\ref{eq:LNA-system}) for the initial condition ${\bf x}(t_0)={\bf e}_i$, $i=1,...,n$. Then, the solution of (\ref{eq:LNA-system}) for the initial condition $\mathbf{x}(t_0)=\mathbf{x}_0$ can be written in the form 
\begin{equation}\label{eq:LNA-fmatrix}
\mathbf{x}(t) = \mathcal{M}(t,t_0) \mathbf{x}_0.
\end{equation} 

\begin{proposition}\label{thm:kopp-fmatrix}
Consider the fundamental matrix $\mathcal{M}(t,t_0)$ of the linear non-autonomous dynamical system (\ref{eq:LNA-system}). If 
\begin{equation}
(\mu_{i}^{(t,t_0)},{\bf w}_{i}^{(t,t_0)},{\bf v}_{i}^{(t,t_0)}), i=1,...,n 
\end{equation} 
are the eigenvalues, left and right eigenvectors of the fundamental matrix $\mathcal{M}(t,t_0)$,  then $\lambda_{i}^{(t,t_0)}$, $i=1,...,n$ such that 
\begin{equation}
\mu_{i}^{(t,t_0)}=e^{\lambda_{i}^{(t,t_0)}}, i=1,...,n
\end{equation} 
are the eigenvalues and 
\begin{equation}
\phi_{i}^{(t,t_0)}(\cdot)=\langle \cdot, {\bf w}_{i}^{(t,t_0)} \rangle, i=1,...,n
\end{equation} 
are the eigenfunctions of the Koopman operator ${\cal K}^{(t,t_{0})}$.
Furthermore, ${\bf v}_{i}^{(t,t_0)}$, $i=1,...,n$ are the Koopman modes of the full state observable and the following expansion is valid
\begin{equation}
\mathbf{x}(t)={\cal K}^{(t,t_{0})}(\mathbf{x}_0)=\sum_{i=1}^{n} e^{\lambda_{i}^{(t,t_0)}} \phi_{i}^{(t,t_0)}(\mathbf{x}_0) {\bf v}_{i}^{(t,t_0)}.
\end{equation}

\end{proposition}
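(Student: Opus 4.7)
The plan is to reduce each claim to a direct calculation using the definition of the Koopman operator family together with the fundamental matrix identity $\mathbf{x}(t)=\mathcal{M}(t,t_0)\mathbf{x}_0$. All statements essentially follow from standard biorthogonality properties of left and right eigenvectors, provided $\mathcal{M}(t,t_0)$ is assumed diagonalizable so that its eigenvectors span $\mathbb{C}^n$; I would state this nondegeneracy hypothesis explicitly at the start.

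First, I would verify that $\phi_i^{(t,t_0)}(\cdot)=\langle \cdot,\mathbf{w}_i^{(t,t_0)}\rangle$ is indeed an eigenfunction. Applying the definition (\ref{eq:koop}) and the representation (\ref{eq:LNA-fmatrix}),
\begin{equation*}
\mathcal{K}^{(t,t_0)}\phi_i^{(t,t_0)}(\mathbf{x}_0)
=\phi_i^{(t,t_0)}(\mathbf{x}(t))
=\langle \mathcal{M}(t,t_0)\mathbf{x}_0,\mathbf{w}_i^{(t,t_0)}\rangle.
\end{equation*}
Using the left-eigenvector identity $\langle \mathcal{M}(t,t_0)\mathbf{y},\mathbf{w}_i^{(t,t_0)}\rangle=\mu_i^{(t,t_0)}\langle \mathbf{y},\mathbf{w}_i^{(t,t_0)}\rangle$, this equals $\mu_i^{(t,t_0)}\phi_i^{(t,t_0)}(\mathbf{x}_0)=e^{\lambda_i^{(t,t_0)}}\phi_i^{(t,t_0)}(\mathbf{x}_0)$, which is exactly (\ref{eq:koop-eigen-def}). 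The definition $\mu_i^{(t,t_0)}=e^{\lambda_i^{(t,t_0)}}$ is just a logarithm convention, so modulo branch-cut issues this step is clean.

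Next, for the KMD expansion, I would normalize the eigenvectors so that the biorthogonality relation $\langle \mathbf{v}_j^{(t,t_0)},\mathbf{w}_i^{(t,t_0)}\rangle=\delta_{ij}$ holds; under the diagonalizability hypothesis this is standard. Expanding the initial condition in the right-eigenvector basis gives
\begin{equation*}
\mathbf{x}_0=\sum_{i=1}^{n}\langle \mathbf{x}_0,\mathbf{w}_i^{(t,t_0)}\rangle\,\mathbf{v}_i^{(t,t_0)}=\sum_{i=1}^{n}\phi_i^{(t,t_0)}(\mathbf{x}_0)\,\mathbf{v}_i^{(t,t_0)}.
\end{equation*}
Applying $\mathcal{M}(t,t_0)$ to both sides and using $\mathcal{M}(t,t_0)\mathbf{v}_i^{(t,t_0)}=\mu_i^{(t,t_0)}\mathbf{v}_i^{(t,t_0)}=e^{\lambda_i^{(t,t_0)}}\mathbf{v}_i^{(t,t_0)}$ yields the stated expansion of $\mathbf{x}(t)=\mathcal{K}^{(t,t_0)}(\mathbf{x}_0)$, and identifies the $\mathbf{v}_i^{(t,t_0)}$ as Koopman modes of the full-state observable.

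The only real obstacle is the diagonalizability assumption: if $\mathcal{M}(t,t_0)$ has Jordan blocks for some pair $(t,t_0)$, then no complete biorthogonal basis of eigenvectors exists and both the eigenfunction formula and the finite expansion fail at that instant, requiring a generalized eigenfunction treatment. For a clean proof I would simply impose that $\mathcal{M}(t,t_0)$ has $n$ linearly independent eigenvectors, and note the extension to the defective case as a remark. Everything else reduces to the two elementary computations above.
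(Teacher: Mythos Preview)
Your proof is correct and follows the same route as the paper, which simply asserts that the result ``is easily derived using the linearity of (\ref{eq:LNA-system}) relative to $\mathbf{x}$'' without writing out any details; you have supplied precisely the direct computations that this one-line proof leaves implicit. Your explicit flagging of the diagonalizability hypothesis on $\mathcal{M}(t,t_0)$ is a genuine refinement, since the paper tacitly assumes a full set of eigenvectors without stating it.
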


\begin{proof}
The stated connection between Koppman operator ${\cal K}^{(t,t_{0})}$ and the fundamental matrix $\mathcal{M}(t,t_0)$ for linear non-autonomous dynamical system is easily derived using the linearity of (\ref{eq:LNA-system}) relative to $\mathbf{x}$.
\end{proof}

{\bf Example 1.}
Let as consider the scalar linear non-autonomous dynamical system
\begin{equation}\label{eq:ls-1d}
\dot{z} = a(t) z.
\end{equation} 
Since the solution of this system equals to 
$${\displaystyle z(t) = z(t_0) e^{ \int_{t_0}^{t} a(\tau) d\tau}},$$ 
it is quite easy to verify that 
$$\lambda_1^{(t,t_0)} =  \int_{t_0}^{t} a(\tau) d\tau \mbox{ and } \phi_1^{(t,t_0)} (z) = z$$
are the eigenvalue and the eigenfunction of the Koopman operator. Some of the other Koopman eigenvalues and eigenfunctions are 
$$\lambda_m^{(t,t_0)} = m \lambda_1^{(t,t_0)}\mbox{ and }\phi_m^{(t,t_0)} (z) = z^{m}, m=2,3,...$$ 
Several higher dimensional examples are examined in detail in Section \ref{sec:results}. \\

Additionally to what is stated in Theorem \ref{thm:kopp-fmatrix}, the fundamental matrix family has another property
\begin{equation}\label{eq:fmatrix-composition}
\mathcal{M}(t,t_0) = \mathcal{M}(t,t_1)\mathcal{M}(t_1,t_0)\mbox{, for all }t>t_1>t_0,
\end{equation} 
analogous to the Koopman operator family property
\begin{equation}
\mathcal{K}^{(t,t_0)} = \mathcal{K}^{(t,t_1)}\mathcal{K}^{(t_1,t_0)}\mbox{, for all }t>t_1>t_0.
\end{equation}

There are some important cases of linear non-autonomous systems for which fundamental matrix can be analytically obtained. We discuss these next.

The first case is the hybrid linear non-autonomous system, i.e. dynamical system (\ref{eq:LNA-system}) with the piecewise constant matrix 
\begin{equation}\label{eq:LNA-hybrid}
\mathbf{A}(t) = \sum_{l=0}^{\infty} \mathbf{A}_l\mathbb{1}_{\left[\right.T_{l},T_{l+1}\rangle}.
\end{equation}
Here $T_l$, $l=0,1,...$ is a sequence of time moments, and $\mathbf{A}_l$, $l=0,1,...$ is a sequence of constant matrices. In this case the fundamental matrix is given iteratively by
\begin{equation}\label{eq:LNA-hybrid-fmatrix}
\mathcal{M}(t,t_0) = e^{\mathbf{A}_{l(t)} (t-T_{l(t)})}\mathcal{M}(T_{l(t)},t_0)
\end{equation} 
where the index $l(t)$ is determined so that $t \in \left[\right.T_{l(t)},T_{l(t)+1}\rangle$.  

The second case is when the matrices $\mathbf{A}(t)$, $t>t_0$ have the same time independent eigenvectors, i.e. 
\begin{equation}\label{eq:NLA-comm}
\mathbf{A}(t)=\mathbf{R}\cdot\mathbf{\Lambda}_\mathbf{A}(t)\cdot\mathbf{R}^{-1} 
\end{equation} 
where $\mathbf{R}=\left( {\bf v}_{1} ... {\bf v}_{n} \right)$ is the matrix of the right eigenvectors, and $\mathbf{\Lambda}_\mathbf{A}(t)$ is the diagonal matrix with the corresponding eigenvalues on the diagonal. Then the fundamental matrix is given by 
\begin{equation}\label{eq:NLA-comm-fmatrix}
\mathcal{M}(t,t_0)=\mathbf{R}\cdot e^{\int_{t_0}^{t}\mathbf{\Lambda}_\mathbf{A}(\tau)d\tau} \cdot \mathbf{R}^{-1}.
\end{equation} 

In the general case, the fundamental matrix can be computed by some appropriate numerical method for the underlying system of ordinary differential equations.

\section{Data-driven algorithm for time dependent eigenvalues}\label{sec:data-algorithm}

Suppose that for some linear non-autonomous system (\ref{eq:LNA-system}) we have a sequence of snapshots of the full state observable 
\begin{equation}\label{eq:LNA-data}
\mathbf{x}_k=\mathbf{x}(t_k), k=0,1,...
\end{equation}
where $t_k=k\Delta t$, $k=0,1,...$. Our task is to compute spectrum of the Koopman operators $\mathcal{K}^{(t_k,t_0)}$, $k=0,1,...$ from these snapshots.
 
Due to the established connection with the spectrum of fundamental matrices, this task can be reduced to the computation of matrices $\mathcal{M}(t_k,t_0)$, $k=0,1,...$. From  (\ref{eq:fmatrix-composition}) we get 
\begin{equation}\label{eq:fmatrix-composition-dt}
\mathcal{M}(t_k,t_0) = \mathcal{M}(t_k,t_{k-1})\mathcal{M}(t_{k-1},t_0), k=1,2,...
\end{equation} 
This gives us the possibility to further reduce the problem to the approximate evaluation of local fundamental matrices $\mathcal{M}(t_k,t_{k-1})$, $k=1,2,...$.  

In order to do this, let us look at the local stencil of snapshots 
\begin{equation}\label{eq:LNA-local-stencil}
\mathbf{x}_{k-1}, \mathbf{x}_{k}, ..., \mathbf{x}_{k+s-1}
\end{equation}
with small $s\ge n$, which is the same for all stencils. To the local stencil (\ref{eq:LNA-local-stencil}) we can apply any of the Arnoldi-like methods and obtain a matrix $\mathbf{M}_{k,k-1}$ such that
\begin{equation}\label{eq:LNA-local-fmatrix}
\mathbf{x}_{k+j}\approx\mathbf{M}_{k,k-1} \mathbf{x}_{k+j-1}, j=0,1,...,s-1
\end{equation}
The approximation is obtained by the projection of $\mathbf{x}_{k+s-1}$ to the Krylov subspace spanned with $\mathbf{x}_{k-1}, \mathbf{x}_{k}, ..., \mathbf{x}_{k+s-2}$ 
\begin{equation}
c_0 \mathbf{x}_{k-1} + c_1 \mathbf{x}_{k} + \cdots c_{s-1} \mathbf{x}_{k+s-2} = \mathbf{x}_{k+s-1} + \mathbf{r} 
\end{equation}
under the condition that  
\begin{equation}\label{eq:project_cond}
\mathbf{r}\perp\mathbf{x}_{k-1},\mathbf{x}_{k},...,\mathbf{x}_{k+s-2}.
\end{equation}
The matrix representation of the projection operator in basis $\mathbf{x}_{k-1}, \mathbf{x}_{k}, ..., \mathbf{x}_{k+s-2}$ is given by the companion matrix
\begin{equation}\label{eq:companion-matrix}
\mathbf{C}=\left(\begin{array}{ccccc}
0 & 0 & \cdots  & 0 & c_0  \\ 
1 & 0 & \cdots  & 0 & c_1 \\ 
0 & 1 & \cdots  & 0 & c_2 \\ 
\vdots & \ddots & \ddots & \vdots  & \vdots  \\ 
0 & 0 & \cdots  & 1 & c_{s-1} 
\end{array} \right).
\end{equation}
The condition (\ref{eq:project_cond}) guaranties that the projection error
\begin{equation}\label{eq:AA-prj-err}
\|\mathbf{r}\|_2=\| \mathbf{x}_{k+s-1} - \left( c_0 \mathbf{x}_{k-1} + c_1 \mathbf{x}_{k} + \cdots c_{s-1} \mathbf{x}_{k+s-2} \right) \|_2  
\end{equation}
is minimal. 
Observe that the companion matrix (\ref{eq:companion-matrix}) is representation of a finite-dimensio-nal approximation of the Koopman operator $\mathcal{K}(t_k,t_{k-1})$ relative to the Krylov basis, while the matrix $\mathbf{M}_{k,k-1}$ is a representation of the same approximation, but relative to the original basis. After we find $\mathbf{M}_{k,k-1}$, $k=1,2,...$, we can construct the approximate fundamental matrix family
\begin{equation}
\mathbf{M}_{0,0} =\mathbf{I}\mbox{, } \mathbf{M}_{k,0} = \mathbf{M}_{k,k-1}\mathbf{M}_{k-1,0}, k=1,2,...
\end{equation} 

In the case when the fundamental matrix can be analyticialy computed, we can messure the error of the proposed algorithm using
\begin{equation}\label{eq:data-alg-err}
E_{k}= \left|\mathbf{M}_{k,0}-\mathcal{M}(t_k,t_0)\right|_{2}/\left|\mathcal{M}(t_k,t_0)\right|_{2}  
\end{equation}
Since at the end we apply Theorem \ref{thm:kopp-fmatrix} and use matrix family $\mathbf{M}_{k,0}$, $k=1,2,...$  to compute approximations of Koopman eigenvalues and eigenvectors, we can look at (\ref{eq:data-alg-err}) as an integrated error of the proposed approximation of the Koopman operator family spectrum.

\subsection{Hybrid linear non-autonomous system}\label{ss:hybrid-algorithm}

For the hybrid linear non-autonomous system (\ref{eq:LNA-hybrid}) the local fundamental matrix is given by
\begin{equation}\label{eq:LNA-hybrid-fmatrix-local}
\mathcal{M}(t_k,t_{k-1})=e^{\mathbf{A}_{l(k)}\Delta t}
\end{equation}
where $l(k)$ is such that $[t_{k-1},t_{k}]\subset\left[\right.T_{l(k)},T_{l(k)+1}\rangle.$ 

From the point of the local stencil (\ref{eq:LNA-local-stencil}), one possibility is that there is some $l(k)$ such that 
$$T_{l(k)+1}\in [t_{k-1},t_{k+s-1}].$$ 
Then, $\mathbf{M}_{k,k-1}$ is an attempt to approximate two very different matrices: $e^{\mathbf{A}_{l(k)}\Delta t}$ and $e^{\mathbf{A}_{l(k)+1}\Delta t}$. As shown in the examples in the next section, this results in a significant increase in the Krylov subspace projection error (\ref{eq:AA-prj-err}). Therefore, the projection error is a switch time indicator and we can use it to indentify all $T_l$, $l=0,1,...$. 

Suppose that $\Delta t$ is small enough, such that for each $l$, we can find $k(l)$ for which the following inclusion is valid
$$[t_{k(l)-1},t_{k(l)+s-1}]\subset\left[\right.T_{l},T_{l+1}\rangle.$$
This means that such local stencil (\ref{eq:LNA-local-stencil}) is completely produced by the action of matrix  
\begin{equation}\label{eq:LNA-hybrid-fmatrix-app}
\mathbf{M}_{k(l),k(l)-1}=e^{\mathbf{A}_{l}\Delta t},
\end{equation}
and we can use this to identify all $\mathbf{A}_l$, $l=0,1,...$.  

Therefore, in the hybrid linear non-autonomous system case, the proposed approach leads to a full identification of the system, and consequently also of its Koopman operator family. The resulting algorithm is summarized bellow. 
\\
\\
\begin{small}\label{alg:hybrid}
{\bf Algorithm 1 (for hybrid systems)}	

\begin{enumerate} 
\item Choose stencil size $s=n$ and maximal projection error $\epsilon>0$.
\item Apply Arnoldi-like method to local stencil of snapshots $\{\mathbf{x}_{k-1},\ldots,\mathbf{x}_{k+s-1}\}$,  to determine $\mathbf{M}_{k,k-1}$ and the projection error $\|r_k\|$ (\ref{eq:AA-prj-err}). 
\item If $\|r_k\| > \epsilon$, set $\mathbf{M}_{k,k-1}=\mathbf{M}_{k-1,k-2}$.
\item Compute  $\mathbf{M}_{k,0}=\mathbf{M}_{k,k-1}\mathbf{M}_{k-1,0}$
\item Compute dynamical system matrix eigenvalues from $\mathbf{M}_{k,k-1}$ and Koopman operator eigenvalues from $\mathbf{M}_{k,0}$.
\item Repeat steps 2-5, for all $k=1,2,...$. 
\end{enumerate} 
\end{small}

\subsection{Linear non-autonomous system with nonlinear time dependency}\label{ss:nonlinear-time}

\begin{theorem}\label{thm:NLA-Aerror}
Consider the dynamical system (\ref{eq:LNA-system}) with 
\begin{equation}\label{eq:LNA-oscillation}
\mathbf{A}(t) = 
\left(\begin{array}{rc}
\sigma(t) & \omega(t) \\ 
-\omega(t) & \sigma(t)
\end{array} \right)   
\end{equation}
where $\omega, \sigma\in C^2\left( \left[ t_0,\infty \right.\rangle \right)$, $\omega\ne 0$.
Let $\mu_i$, $i=1,2$ be complex conjugate eigenvalues of the companion matrix related to the Krylov subspace spanned with 
\begin{equation}\label{eq:Krylov3}
\mathbf{x}(t-\Delta t), \mathbf{x}(t), \mathbf{x}(t+\Delta t) .
\end{equation}
Then, the following relations hold
\begin{equation}\label{eq:AA-error-Re-lim}
\ln|\mu_i| = \left( \sigma(t) + \frac{\dot{\omega}(t)}{2\omega(t)} \right) \Delta t  + \mathcal {O}(\Delta t^2), 
\end{equation}
\begin{equation}\label{eq:AA-error-Im-lim}
\mbox{Arg}(\mu_i) = \omega(t)\Delta t\sqrt{1-\frac{\dot{\sigma}(t)}{\omega(t)^2}}+ \mathcal {O}(\Delta t^2). 
\end{equation}
for every $t\in\left[ t_0,\infty \right.\rangle$.
\end{theorem}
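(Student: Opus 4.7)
The plan is to exploit the low dimension to obtain explicit closed forms for $c_0,c_1$ and then extract the eigenvalue asymptotics by Taylor expansion. Since $n=2$ and the stencil size is $s=2$, the two Krylov vectors $\mathbf{x}(t-\Delta t),\mathbf{x}(t)$ generically span $\mathbb{R}^2$, the projection residual in (\ref{eq:AA-prj-err}) vanishes, and $(c_0,c_1)$ are uniquely determined by the exact identity $\mathbf{x}(t+\Delta t)=c_0\mathbf{x}(t-\Delta t)+c_1\mathbf{x}(t)$. The characteristic polynomial of the companion matrix is $\mu^2-c_1\mu-c_0=0$, so for the complex conjugate pair I have the clean identities $|\mu|^2=-c_0$ and $2\,\mathrm{Re}(\mu)=c_1$, yielding $\ln|\mu|=\tfrac12\ln(-c_0)$ and $(\mathrm{Im}\,\mu)^2=-(c_1^2+4c_0)/4$.

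To evaluate $c_0,c_1$ explicitly, I would pass to the complex coordinate $z=x_1+ix_2$, under which (\ref{eq:LNA-oscillation}) reduces to the scalar ODE $\dot z=a(t)z$ with $a(t)=\sigma(t)-i\omega(t)$. The exact flow is $z(t\pm\Delta t)=e^{\alpha_\pm}z(t)$ with $\alpha_\pm=\int_t^{t\pm\Delta t}a(\tau)\,d\tau$, so dividing the stencil identity by $z(t)$ yields the scalar complex equation $e^{\alpha_+}=c_0\,e^{\alpha_-}+c_1$. Because $c_0,c_1$ are real, matching real and imaginary parts decouples it into
\[
c_0=\frac{\mathrm{Im}\,e^{\alpha_+}}{\mathrm{Im}\,e^{\alpha_-}},\qquad c_1=\mathrm{Re}\,e^{\alpha_+}-c_0\,\mathrm{Re}\,e^{\alpha_-}.
\]

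The remainder is Taylor expansion. Using $\alpha_\pm=\pm a\Delta t+\tfrac12\dot a\Delta t^2\pm\tfrac16\ddot a\Delta t^3+O(\Delta t^4)$, expanding $e^{\alpha_\pm}$, and substituting into the closed forms above gives series for $c_0$ and $c_1$. For (\ref{eq:AA-error-Re-lim}) only first-order accuracy in $c_0$ is required: one finds $-c_0=1+(2\sigma+\dot\omega/\omega)\Delta t+O(\Delta t^2)$, and inserting this into $\ln|\mu|=\tfrac12\ln(-c_0)$ via $\ln(1+x)=x+O(x^2)$ yields (\ref{eq:AA-error-Re-lim}) directly.

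The hard part is (\ref{eq:AA-error-Im-lim}). Since $\mathrm{Re}(\mu)\to 1$ as $\Delta t\to 0$, the $\Delta t^0$ and $\Delta t^1$ contributions to the discriminant $c_1^2+4c_0$ cancel identically, and the leading behavior of $(\mathrm{Im}\,\mu)^2$ only surfaces after both $c_0$ and $c_1$ are pushed to order $\Delta t^2$. The delicate step is tracking the cancellations among the $\sigma^2,\omega^2,\dot\sigma,\dot\omega$ terms arising from the expansions of $e^{\alpha_\pm}$ and, equivalently, from $\mathrm{Im}\,e^{\alpha_+}/\mathrm{Im}\,e^{\alpha_-}$, without losing signs. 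Once the $\Delta t^2$ coefficient of $-(c_1^2+4c_0)/4$ is isolated, a square root followed by division by $|\mu|=1+O(\Delta t)$ gives $\sin(\mathrm{Arg}\,\mu)$, and inversion through $\arcsin$ (or the equivalent $\arctan(\mathrm{Im}\,\mu/\mathrm{Re}\,\mu)$) delivers the expansion (\ref{eq:AA-error-Im-lim}) after factoring $\omega\Delta t$ out of the square root.
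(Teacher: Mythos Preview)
Your proposal is correct and is essentially the paper's argument: both solve exactly for $c_0,c_1$ (the residual vanishes in this $2\times 2$ case), invoke $\mu^2-c_1\mu-c_0=0$, and Taylor expand; your complex coordinate $z=x_1+ix_2$ is just a repackaging of the paper's rotation--scaling fundamental matrix and produces the identical closed forms $c_0=-e^{\alpha_-+\alpha_+}\sin\beta_+/\sin\beta_-$ and $c_1=e^{\alpha_+}\sin(\beta_-+\beta_+)/\sin\beta_-$ once the real and imaginary parts of your $\alpha_\pm$ are separated.

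The one noteworthy difference is tactical, and it concerns exactly the step you flag as ``the hard part.'' Rather than expanding $e^{\alpha_\pm}$ into power series and tracking $\Delta t^2$ cancellations in the discriminant $c_1^2+4c_0$, the paper keeps the trigonometric closed forms intact and works with $\tan(\mathrm{Arg}\,\mu)=\sqrt{(2\sqrt{-c_0}/c_1)^2-1}$. After substitution this ratio collapses cleanly to $\tan(\omega\Delta t)\sqrt{1+(e^{-\dot\sigma\Delta t^2}-1)/\sin^2(\omega\Delta t)}+\mathcal{O}(\Delta t^2)$, and a single limit $(e^{-\dot\sigma\Delta t^2}-1)/\sin^2(\omega\Delta t)\to -\dot\sigma/\omega^2$ finishes (\ref{eq:AA-error-Im-lim}). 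So your route works, but retaining the $\sin\beta_\pm$ factors instead of expanding immediately spares you the delicate bookkeeping you anticipate.
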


\begin{proof}
Since 
\begin{equation}
\mathbf{A}_1 \mathbf{A}_2 =
\left(\begin{array}{rc}
\sigma_1\sigma_2-\omega_1\omega_2 & \sigma_1\omega_2+\omega_1\sigma_2 \\ 
-(\sigma_1\omega_2+\omega_1\sigma_2) & \sigma_1\sigma_2-\omega_1\omega_2
\end{array} \right)   
\end{equation}
where $\mathbf{A}_i=\mathbf{A}(t_i)$, $t_i\ge t_0$, $i=1,2$, i.e. matrices (\ref{eq:LNA-oscillation}) are commutative,  we can apply (\ref{eq:NLA-comm-fmatrix}) and find the fundamental matrix
\begin{equation}\label{eq:osci-fmatrix}
\mathcal{M}(t,t_0)=
e^{\alpha(t,t_0)}
\left(\begin{array}{rc}
 \cos\beta(t,t_0) & \sin\beta(t,t_0) \\ 
-\sin\beta(t,t_0) & \cos\beta(t,t_0) 
\end{array} \right),   
\end{equation}
where 
\begin{equation}\label{eq:osci-ab}
\alpha(t,t_0)=\int_{t_0}^{t}\sigma(\tau)d\tau \mbox{, and  } \beta(t,t_0)=\int_{t_0}^{t}\omega(\tau)d\tau.
\end{equation}  

For the initial condition written in the form
$$\mathbf{x}(t_0)=
e^{\alpha_0}
\left(\begin{array}{c}
 \cos\beta_0 \\ 
 \sin\beta_0 
\end{array} \right),$$   
the solution is 
\begin{equation}
\mathbf{x}(t)=
\mathcal{M}(t,t_0)\mathbf{x}(t_0)=
e^{\alpha(t,t_0)+\alpha_0}
\left(\begin{array}{r}
 \cos(\beta(t,t_0)-\beta_0) \\ 
-\sin(\beta(t,t_0)-\beta_0) 
\end{array} \right).   
\end{equation}
Now, for a chosen $t\in\left[ t_0,\infty \right.\rangle$ and $\Delta t>0$ we look at the Krylov subspace spanned with $\mathbf{x}_{-}=\mathbf{x}(t-\Delta t), \mathbf{x}=\mathbf{x}(t), \mathbf{x}_{+}=\mathbf{x}(t+\Delta t)$ 
and compute the related companion matrix, i.e. we must find $c_0,c_1$ such that
$$
c_0 \mathbf{x}_{-} + c_1 \mathbf{x} = \mathbf{x}_{+} + \mathbf{r} \mbox{, and  } \mathbf{r}\perp\mathbf{x}_{-},\mathbf{x}.
$$
The solution is
\begin{equation}
c_0 = -e^{\alpha_{-}+\alpha_{+}}\frac{\sin\beta_{+}}{\sin\beta_{-}},
\end{equation}
\begin{equation}
c_1 = e^{\alpha_{+}}\frac{\sin(\beta_{-}+\beta_{+})}{\sin\beta_{-}},
\end{equation}
where $\alpha_{\pm}=\pm\alpha(t\pm\Delta t,t)$ and $\beta_{\pm}=\pm\beta(t\pm\Delta t,t)$. The eigenvalues $\mu_i$ are roots of the equation $\mu^2-c_1\mu-c_0=0$ so they satisfy
\begin{equation}\label{eq:Arnoldi-root-Re}
\ln|\mu_i| = \frac{1}{2}\ln(-c_0),
\end{equation}
\begin{equation}\label{eq:Arnoldi-root-Im}
\tan(\mbox{Arg}(\mu_i))=\sqrt{\left(\frac{2\sqrt{-c_0}}{c_1} \right)^2-1}.
\end{equation}

Since $\omega=\omega(\tau)$ and $\sigma=\sigma(\tau)$ can be approximated with Taylor polynomials of first degree in the neighborhood of $t$, with integration we get
\begin{equation}\label{eq:AA-alphapm}
\alpha_{\pm} = \sigma(t)\Delta t \pm \frac{1}{2}\dot{\sigma}(t)\Delta t^2 + \mathcal {O}(\Delta t^3),
\end{equation}
\begin{equation}\label{eq:AA-betapm}
\beta_{\pm}  = \omega(t)\Delta t \pm \frac{1}{2}\dot{\omega}(t)\Delta t^2 + \mathcal {O}(\Delta t^3).
\end{equation}

By applying (\ref{eq:AA-alphapm}) and (\ref{eq:AA-betapm}) to (\ref{eq:Arnoldi-root-Re}) we get
\begin{equation}\label{eq:AA-error-Re}
\ln|\mu_i| = \sigma(t)\Delta t + \frac{1}{2}\ln\left(\frac{\sin\beta_{+}}{\sin\beta_{-}} \right)+
\mathcal{O}(\Delta t^2) 
\end{equation}
Further computations give us
\begin{equation}
\lim\limits_{\Delta t\rightarrow 0} \ln\left(\frac{\sin\beta_{+}}{\sin\beta_{-}} \right)=0,
\end{equation}
but
\begin{equation}
\lim\limits_{\Delta t\rightarrow 0} \frac{1}{\Delta t}\ln\left(\frac{\sin\beta_{+}}{\sin\beta_{-}} \right)=\frac{\dot{\omega}(t)}{\omega(t)}.
\end{equation}
Therefore,  (\ref{eq:AA-error-Re-lim}) is valid.

For the imaginary part, we apply (\ref{eq:AA-alphapm}) and (\ref{eq:AA-betapm}) to (\ref{eq:Arnoldi-root-Im}) and get
\begin{equation}\label{eq:AA-error-Im}
\tan(\mbox{Arg}(\mu_i))=\tan(\omega(t)\Delta t)\sqrt{1+\frac{e^{-\dot{\sigma}(t)\Delta t^2}-1} {\sin^2(\omega(t)\Delta t)}} + \mathcal {O}(\Delta t^2).
\end{equation}
Computations give us
\begin{equation}
\lim\limits_{\Delta t\rightarrow 0} \frac{e^{-\dot{\sigma}(t)\Delta t^2}-1}{\sin^2(\omega(t)\Delta t)} = 
\frac{-\dot{\sigma}(t)}{\omega(t)^2},
\end{equation}
Therefore, (\ref{eq:AA-error-Im-lim}) is also valid. 
\end{proof}

From the Arnoldi-like method applied to the Krylov subspace (\ref{eq:Krylov3}) we expect to obtain the approximation of the eigenvalues that correspond to the action of the  Koopman operator on that subspace. Thus, due to its close relation with the fundamental matrix we should expect to get the eigenvalues of the fundamental matrix $\mathcal{M}(t,t-\Delta t) \approx e^{\mathbf{A}(t) \Delta t}$, which are approximately equal to $e^{\sigma(t) \Delta t \pm i \omega(t) \Delta t}$. However, the consequence of the Theorem \ref{thm:NLA-Aerror} is that any Arnoldi-like method applied to the dynamical system with the underlying matrix (\ref{eq:LNA-oscillation}) mixes the values of the real and imaginary parts of the local Koopman eigenvalues and produces an error that doesn't vanish with $\Delta t\rightarrow 0$.  This issue is visible in examples presented in Section \ref{sec:results}. Since form of the matrix (\ref{eq:LNA-oscillation}) is the form of the Jordan block belonging to any couple of complex conjugate eigenvalues, the discovered issue goes far beyond the examined two-dimensional dynamical system. 

The appropriate way to handle this issue in the Koopman framework is to redefine the observables. Application of (\ref{eq:koop-eigen-def}) to the linear non-autonomous system (\ref{eq:LNA-oscillation}) gives eigenvalues and eigenfunctions of the Koopman operator family $\mathcal{K}^{(t,t_0)}$ that can be computed from the spectrum of matrix $\mathbf{A}(t)$ 
\begin{equation}
\lambda_{\pm}^{(t,t_0)}=\alpha(t,t_0)\pm i\beta(t,t_0), \phi_{\pm}^{(t,t_0)}=x_1\mp ix_2.
\end{equation} 
However, this choice of observables doesn't solve the Arnoldi-like method issue because it doesn't decouple real and imaginary part of the eigenvalues. 

We can obtain the desired decoupling by using observables defined by 
\begin{equation}\label{eq:good-observables}
 u_1 = \sqrt{x_1^2+x_2^2} \mbox{ and } 
 u_2 = (x_1 + ix_2)/u_1.
\end{equation} 
These observables are also eigenfunctions of the same Koopman operator family $\mathcal{K}^{(t,t_0)}$ related to eigenvalues 
\begin{equation}
\lambda_1^{(t,t_0)}=\alpha(t,t_0) \mbox{ and } \lambda_2^{(t,t_0)}=-i\beta(t,t_0)
\end{equation} 
respectively.
It is easy to see that vector of these observables $\mathbf{u}=(u_1,u_2)^T$ satisfies
\begin{equation} \label{eq:LNA-osci-redef}
\dot{\mathbf{u}}=\mathbf{\tilde{A}}(t)\mathbf{u}, 
\end{equation}
with diagonal time dependent matrix $\mathbf{\tilde{A}}(t)$ whose diagonal elements are
\begin{equation}
\sigma(t) \mbox{ and } -i\omega(t).
\end{equation} 
The fundamental matrix $\tilde{\mathcal{M}}(t,t_0)$ for (\ref{eq:LNA-osci-redef}) is also diagonal, with diagonal elements
\begin{equation}
e^{\alpha(t,t_0)} \mbox{ and } e^{-i\beta(t,t_0)}.
\end{equation} 
With this choice of observables the proposed algorithm works well, and we get accurate identification of eigenvalues, as it can be seen in the examples in Section \ref{sec:results}. At the end, by using relations
\begin{equation}
x_1=u_1(u_2+\bar{u_2})/2 \mbox{ and } x_2=u_1(u_2-\bar{u_2})/2i
\end{equation} 
we can reconstruct the full state observables. 

The algorithm is summarized bellow. 
\\
\\
\begin{small}\label{alg:continuous}
{\bf Algorithm 2 (for continuous time dependency systems)}	
	
\begin{enumerate} 
\item Chose a set of observables $\mathbf{u}=(u_1,...,u_m)^T$ appropriate for the considered system. 
\item Apply Arnoldi-like method to local stencil of two snapshots $\{u_i(t_{k-1}),u_i(t_{k})\}$, separately for each $i=1,...,m$, and then determine $\mathbf{\tilde{M}}_{k,k-1}$. 
\item Compute  $\mathbf{\tilde{M}}_{k,0}=\mathbf{\tilde{M}}_{k,k-1}\mathbf{\tilde{M}}_{k-1,0}$
\item Compute dynamical system matrix eigenvalues from $\mathbf{\tilde{M}}_{k,k-1}$ and Koopman operator eigenvalues from $\mathbf{\tilde{M}}_{k,0}$.
\item Repeat steps 2-4, for all $k=1,2,...$. 
\end{enumerate} 
\end{small}

This approach leads us to the same good observables / coordinates that were pointed out in \cite{BudisicMezic_ApplKoop} through theoretical investigation.  
Notice that if $r(\mathbf{x})$ and $\Theta(\mathbf{x})$ are polar coordinates of the state vector $\mathbf{x}$ as defined in \cite{BudisicMezic_ApplKoop}, then $u_1 = r(\mathbf{x})$ and $u_2 = e^{ i \Theta(\mathbf{x})}$. Thus, we can conclude that the eigenfunctions $r(\mathbf{x})$ and $e^{i \Theta(\mathbf{x})}$ provide us with a good coordinate system for studying dynamics of the considered system. Moreover, as it was shown in \cite{BudisicMezic_ApplKoop}, the initial system defined with (\ref{eq:LNA-system}) and (\ref{eq:LNA-oscillation}) and the obtained system (\ref{eq:LNA-osci-redef}) are 
topologically conjugate to each other  through the nonlinear conjugate transformation 
$m : \mathbf{R}^{2} \rightarrow \mathbf{R}^{+} \times S^{1}$ given by
\begin{equation}\label{eq:ConjMap}
m(\mathbf{x})=\left(r(\mathbf{x}),e^{i \Theta(\mathbf{x})} \right).
\end{equation}

\section{Numerical results}\label{sec:results}

In this section first we demonstrate the application of Algorithm 1 on the hybrid linear non-autonomous systems (test examples \ref{ss:h2f}, \ref{ss:hdd}, \ref{ss:hco3}, and \ref{ss:mcm}). Then, on the linear non-autonomous systems with continuously changing underlying matrix (test examples \ref{ss:c2f}, \ref{ss:cdd}, and \ref{ss:cco3}), we show performance of Algorithm 2. Effects of the change (discontinuous and continuous) in the imaginary part of the underlying matrix eigenvalues are examined in test examples \ref{ss:h2f} and \ref{ss:c2f}; while changes in the real part are examined in test examples \ref{ss:hdd} and \ref{ss:cdd}. Examples \ref{ss:hco3} and \ref{ss:cco3} are introduced to test new algorithms on higher dimensional dynamical systems with all state variables strongly coupled. Test example \ref{ss:mcm} is a successful application of Algorithm~1 to multicompartment models with time delay \cite{Svenkeson}, which is often referenced in medicine and pharmacotherapy. 

All the examples are chosen so that exact solutions and exact Koopman eigendecompositions can be computed. Also, if not stated otherwise, a sequence of snapshots for testing data-driven algorithms is provided using time step $\Delta t=0.01$.  

The SVD enhanced DMD algorithm \cite{schmid:2010} is the Arnoldi-like method used in step 2 of both new algorithms. Also, new algorithms are compared with the same method applied on the moving stencil. The Standard DMD in all plot legends denotes the DMD algorithm from  \cite{schmid:2010}.   
Finally, in all the plots in which exact and numerical values overlap, the exact values are very slightly offset to assure both sets of data are visible.

\subsection{Switching frequency}\label{ss:h2f}

An oscillator with the switching frequency has governing equations of the form (\ref{eq:LNA-system}) with the underlying matrix (\ref{eq:LNA-hybrid}) where 
\begin{equation}\label{eq:ex_2f}
\mathbf{A}_l = \left\{\begin{array}{ll}
\left(\begin{array}{cc}
0 & 1 \\ 
-\omega_1^2 & 0
\end{array} \right), & l=0,2,4,... \\ 
\\
\left(\begin{array}{cc}
0 & 1 \\ 
-\omega_2^2 & 0
\end{array} \right), & l=1,3,5,... 
\end{array} \right. 
\end{equation}

The eigenvalues of the underlying matrices are $\pm \omega_1 i$ and $\pm \omega_2 i$, and the matrices are non-commutative. For the frequency values $\omega_1=2$, $\omega_2=1$, and switching times $T_l=l$, $l=1,2,...$ the oscillator is unstable. The exact solution is given by (\ref{eq:LNA-hybrid-fmatrix}). 

In Fig.\ref{fig:h2f-01}(b) we show the exact fundamental matrix eigenvalues for the time interval $[0,5]$, and we observe eigenvalues that are not on the unit circle. This time interval is chosen since the values of the eigenvalues off the unit circle grow even more with time, and the unit circle on such plot would not be visible. In Fig.\ref{fig:h2f-02} the eigenvalues of the underlying dynamical system matrix are plotted and these are not revealing the instability. However, when the time-dependent Koopman operator eigenvalues are plotted in Fig.\ref{fig:h2f-03}, we clearly observe the process of the amplitude growth. The negative real parts of Koopman eigenvalues belong to the particular solution that decays, which is added to the particular solution that has growing real parts of eigenvalues; so the solution is unstable. On these figures exact values and values computed with Algorithm 1 completely overlap. However, if the dynamical system matrix eigenvalues are computed with the standard DMD algorithm on moving stencils (as in Fig.\ref{fig:h2f-02-err}), incorrect values appear at every switch. This causes a significant error in Koopman operator eigenvalues also (as in Fig.\ref{fig:h2f-03-err}). In Fig.\ref{fig:h2f-02-err}(c) we observe how the Krylov subspace projection error has large values at every switch, which is the key point for Algorithm 1.

\begin{figure}[!htb]
	\centering
	\includegraphics[width=0.9\linewidth]{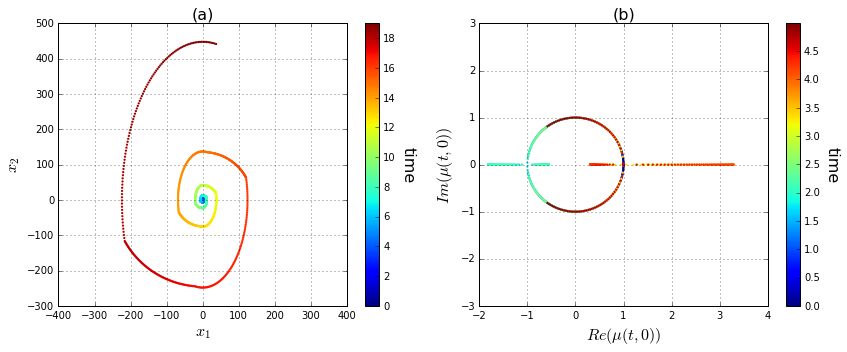}
	\caption{Exact solution starting with the inital condition (1,1) in the state space (a), and exact fundamental matrix eigenvalues over time interval $[0,5]$ (b) for the dynamical system with switching frequency (\ref{eq:ex_2f}).}
	\label{fig:h2f-01}
\end{figure}
	
\begin{figure}[!htb]
	\centering
	\includegraphics[width=0.9\linewidth]{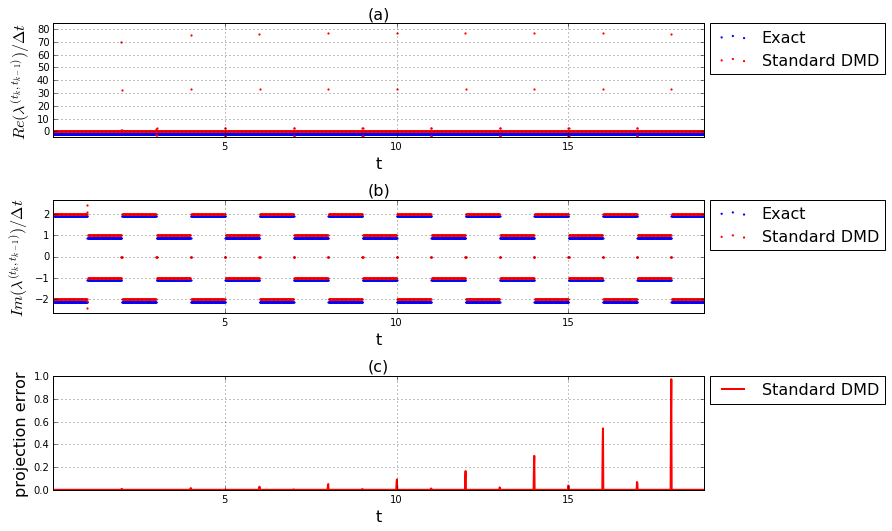}
	\caption{Dynamical system matrix eigenvalues and the projection error (\ref{eq:AA-prj-err}) for the dynamical system with switching frequency (\ref{eq:ex_2f}), computed with standard DMD.}
	\label{fig:h2f-02-err}
	
	\includegraphics[width=0.9\linewidth]{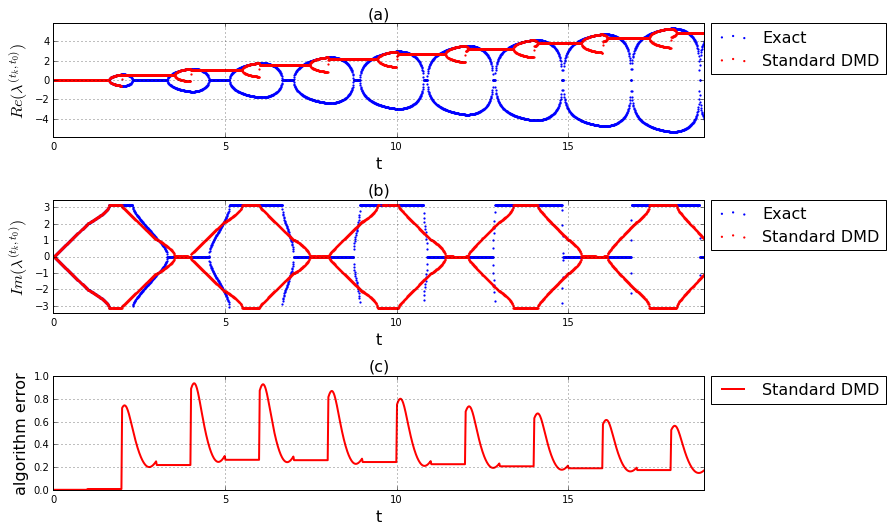}
	\caption{Koopman operator eigenvalues and the algorithm error (\ref{eq:data-alg-err}) for the dynamical system with switching frequency (\ref{eq:ex_2f}), computed with standard DMD.}
	\label{fig:h2f-03-err}
\end{figure}

\begin{figure}[!htb]
	\centering
	\includegraphics[width=0.9\linewidth]{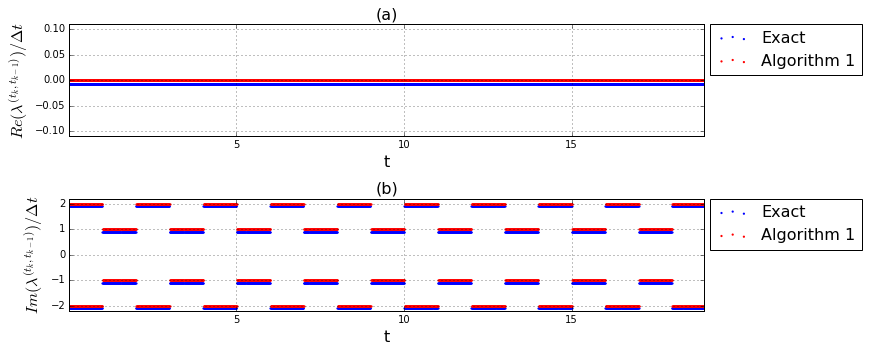}
	\caption{Dynamical system matrix eigenvalues for the dynamical system with switching frequency (\ref{eq:ex_2f}), computed with Algorithm 1. (Exact values are offset to improve visibility.)}
	\label{fig:h2f-02}
	
	\includegraphics[width=0.9\linewidth]{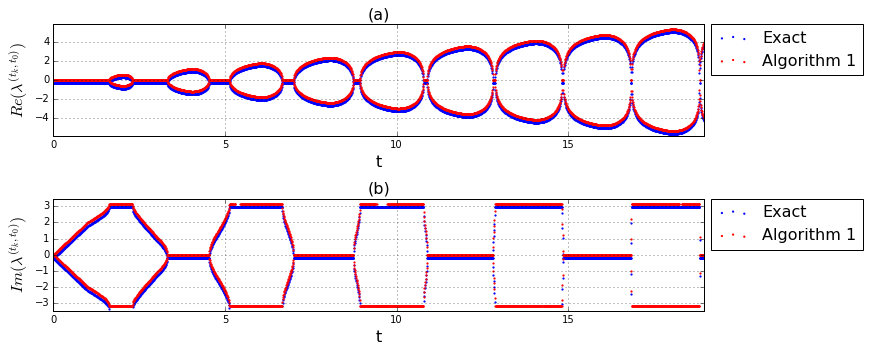}
	\caption{Koopman operator eigenvalues  for the dynamical system with switching frequency (\ref{eq:ex_2f}), computed with Algorithm 1. (Exact values are offset to improve visibility.)}
	\label{fig:h2f-03}
\end{figure}

\subsection{Switching damped-driven behavior}\label{ss:hdd}

In this example we consider an oscillator with the switching damped-driven behavior, i.e. with governing equations (\ref{eq:LNA-system}), the underlying matrix (\ref{eq:LNA-hybrid}) and 

\begin{equation}\label{eq:ex_dd}
\mathbf{A}_l = \left\{\begin{array}{ll}
\left(\begin{array}{cc}
\sigma_1 & 1 \\ 
-4 & \sigma_1
\end{array} \right), & l=0,2,4,... \\ 
\\
\left(\begin{array}{cc}
\sigma_2 & 1 \\ 
-4 & \sigma_2
\end{array} \right), & l=1,3,5,... 
\end{array} \right. 
\end{equation}

The eigenvalues of the underlying matrices are $\sigma_1\pm 2i$ and $\sigma_2\pm 2i$. For the real part $\sigma_1=1$, $\sigma_2=-1$, and switching times $T_l=T_{l-1}+l/2$, $l=1,2,...$, the matrices are commutative. Therefore, we can solve the system analytically and obtain the fundamental matrix
\begin{equation}\label{eq:dd_fmatrix}
{\cal M}(t,0)=
\left(\begin{array}{cc}
e^{\alpha(t,0)}\cos(2t) & \frac{1}{2}e^{\alpha(t,0)}\sin(2t) \\ 
-2 e^{\alpha(t,0)}\sin(2t) & e^{\alpha(t,0)}\cos(2t) \end{array} \right),   
\end{equation}
and its eigenvalues $\mu(t,0)=e^{\alpha(t,0)\pm 2ti}$. Here 
\begin{equation}\label{eq:dd_fmatrix_eigens}
\alpha(t,0)=\int_{0}^{t}(-1)^{l}\mathbb{1}_{\left[\right.T_{l},T_{l+1}\rangle} dt.
\end{equation}

In this case the real part $\sigma$ is switching between two values of which one causes driven behavior and the other damped behavior. We show the exact solution in Fig.\ref{fig:hdd-01}. In Fig.\ref{fig:hdd-02} we observe correct eigenvalues of the underlying matrix. The time-dependent Koopman eigenvalues in Fig.\ref{fig:hdd-03} clearly show oscillation (plot(b)), and damping-driving switching at correct switching times (plot (a)). The results obtained with Algorithm 1 match the exact ones, up to the machine round-off error.

\begin{figure}[!htb]
	\centering
	\includegraphics[width=0.9\linewidth]{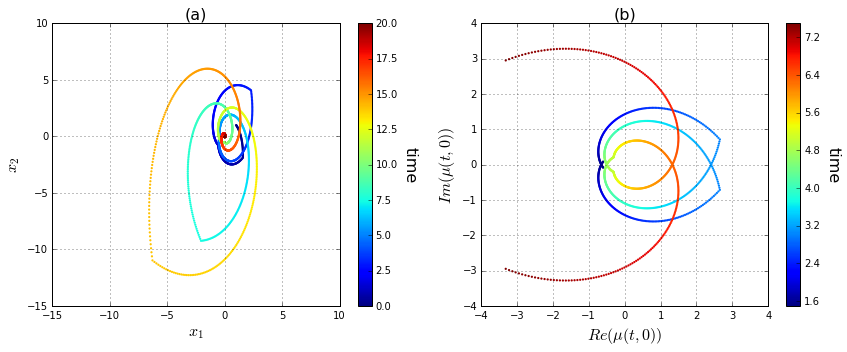}
	\caption{Exact solution starting with the inital condition (1,1) in the state space (a), and exact fundamental matrix eigenvalues (b), for the dynamical system with switching damped-driven behavior (\ref{eq:ex_dd}).}
	\label{fig:hdd-01}
\end{figure}

\begin{figure}[!htb]
	\centering
	\includegraphics[width=0.9\linewidth]{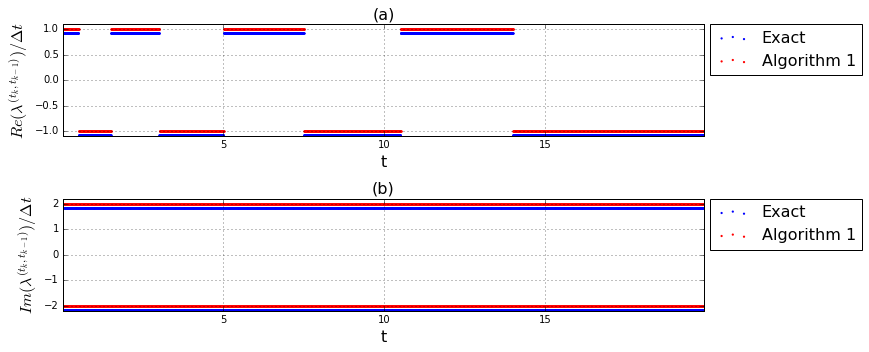}
	\caption{Dynamical system matrix eigenvalues for the dynamical system with switching damped-driven behavior (\ref{eq:ex_dd}), computed with Algorithm 1. (Exact values are offset to improve visibility.)}
	\label{fig:hdd-02}

	\includegraphics[width=0.9\linewidth]{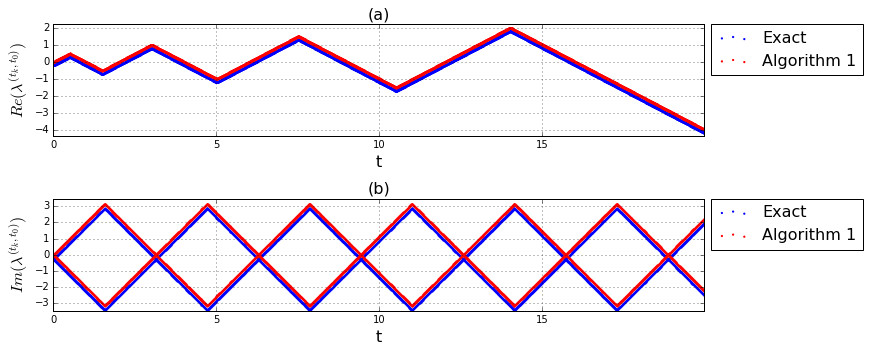}
	\caption{Koopman operator eigenvalues for the dynamical system with switching damped-driven behavior (\ref{eq:ex_dd}), computed with Algorithm 1. (Exact values are offset to improve visibility.)}
	\label{fig:hdd-03}
\end{figure}

\subsection{Coupled oscillators with switching frequency}\label{ss:hco3}

Here we consider a two degrees of freedom oscillator with masses $m_1$ and $m_2$, spring elasticities $k_1$, $k_2$, and $k_3$ as shown in Fig.\ref{fig:mass_spring} \cite{Veselic}.

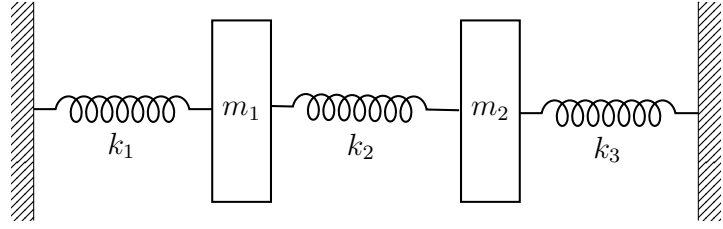
\begin{figure}
	\centering
		\resizebox{0.6\linewidth}{!}{
		\begin{tikzpicture}[mass/.style={draw,outer sep=0pt,thick}]
		\tikzstyle{spring}=[thick,decorate,decoration={coil,aspect=0.7,amplitude=5,pre length=0.3cm,post length=0.3cm,segment length=6}]
		\tikzstyle{ground}=[fill,pattern=north east lines,draw=none,minimum width=0.75cm,minimum height=0.3cm]
		\node [mass] (M1) [minimum width=0.5cm, minimum height=2.5cm] {$m_1$};
		\node (wall) [ground, rotate=-90, minimum width=3cm,yshift=-3cm] {};
		\draw (wall.north east) -- (wall.north west);
		\draw [spring] (wall.100) -- ($(M1.north west)!(wall.100)!(M1.south west)$) 
		node[below = 0.5 cm, left = 0.9 cm]  {$k_1$};
		\node[at={($(M1.east)+(3cm,0)$)}] [mass] (M2) [minimum width=0.5cm, minimum height=2.5cm] {$m_2$};
		\draw [spring] ($(M1.south east)!(M1.170)!(M1.north east)$) -- ($(M1.north east)!(M2.170)!(M2.west)$) 
		node[below = 0.5 cm, left = 1 cm] {$k_2$};
		\node[at={($(M1.east)+(3cm,0)$)}] (wall2) [ground, rotate=90, minimum width=3cm,yshift=-3cm] {};
		\draw (wall2.north east) -- (wall2.north west);
		\draw [spring] ($(M2.north east)!(wall2.100)!(M2.south east)$) -- (wall2.100) 
		node[below = 0.5 cm, left = 0.9 cm]  {$k_3$}; 
		\end{tikzpicture}
	}
	\caption{Mass spring oscillator}
	\label{fig:mass_spring}
\end{figure}

For this system of coupled oscillators the Newton's law gives
\begin{equation}
m_1 \ddot{x}_1=-k_1 x_1-k_2(x_1-x_2)
\end{equation}  
\begin{equation}
m_2 \ddot{x}_2=-k_2(x_2-x_1)-k_3 x_2
\end{equation}  
where $x_1$ and $x_2$ are mass displacements from the equilibrium position. 
If we add variables $x_3=\dot{x}_1$ and $x_4=\dot{x}_2$ we obtain (\ref{eq:LNA-system}) with the underlying matrix
\begin{equation}\label{eq:hco3-matrix}
\mathbf{A} = 
\left(\begin{array}{cccc}
0 & 0 & 1 & 0 \\ 
0 & 0 & 0 & 1 \\ 
-\frac{k_1+k_2}{m_1} & \frac{k_2}{m_1} & 0 & 0 \\ 
\frac{k_2}{m_2} & -\frac{k_2+k_3}{m_2} & 0 & 0  
\end{array} \right).   
\end{equation}
If we solve the generalized eigenvalue problem
\begin{equation}\label{eq:co3-geneig-1}
\det\left(\mathbf{K}-\nu_j\mathbf{M}\right)=0, 
\end{equation}
$j=1,2$ for matrices
\begin{equation}\label{eq:co3-geneig-2}
\mathbf{K}=
\left(
\begin{array}{cc}
k_1+k_2 & -k_2 \\
-k_2 & k_2+k_3 
\end{array}
\right) \mbox{ and }
\mathbf{M}=
\left(
\begin{array}{cc}
m_1 & 0 \\
0 & m_2 
\end{array}
\right)
\end{equation}   
then 
\begin{equation}
\pm i\omega_j = \pm i\sqrt{\nu_j}, \quad j=1,2
\end{equation}
are the four eigenvalues of (\ref{eq:hco3-matrix}). 

Now we suppose there is a sequence of time moments 
$T_l=l$, $l=0,1,...$ at which elasticity coefficients change value, i.e.
\begin{equation}
k_j(t)=k_j^{(l)}, \mbox{ for } t\in[T_l,T_{l+1}\rangle, j=1,2,3
\end{equation}
For the computations, we take $T_l=l \mbox{ for } l=0,1,...,$, $m_1=m_2=1$,  $k_2=1$,
\begin{equation}
k_1^{(l)}= \left\{\begin{array}{ll}
4, & l=0,2,4,... \\ 
9, & l=1,3,5,...  
\end{array} \right. 
\mbox{ and }
k_3^{(l)}= \left\{\begin{array}{ll}
9, & l=0,2,4,... \\ 
16, & l=1,3,5,...  
\end{array} \right. 
\end{equation}  
For this choice of values exact eigenvalues  of (\ref{eq:hco3-matrix}) are
\begin{equation}\label{eq:hco3-freqs}
\pm i\omega_{1,2}^{(l)}= \left\{\begin{array}{ll}
\pm i \sqrt{(27\pm\sqrt{53})/2}, & l=0,2,4,... \\ 
\pm i \sqrt{(15\pm\sqrt{29})/2}, & l=1,3,5,...  
\end{array} \right. 
\end{equation}  
For the exact solution (Fig.\ref{fig:hco3-01}) we use (\ref{eq:LNA-hybrid-fmatrix}).
 
In Fig.\ref{fig:cco3-01}(a) solution pairs $(x_1,x_3)$ and $(x_2,x_4)$ are plotted. 
In Fig.\ref{fig:hco3-01}(b) eigenvalues off the unit circle appear, which is consistent with the fact that due to elasticity coefficients switching, also the frequencies of coupled oscillators switch (\ref{eq:hco3-freqs}). Therefore, some instabilities appear, however are short lived and appears seemingly at random times. 

Eigenvalues of the dynamical system matrix do not reveal that instability in an obvious way (Fig.\ref{fig:hco3-01}), but the time-dependent Koopman eigenvalues clearly show both: the change in frequency of the oscillations (Fig.\ref{fig:hco3-03}(b)), and the bursts of the amplitude of the oscillations (Fig.\ref{fig:hco3-03}(a)). Even in this four dimensional dynamical system case with strongly coupled state variables, Algorithm 1 gives highly accurate results.

\begin{figure}[!htb]
	\centering
	\includegraphics[width=0.9\linewidth]{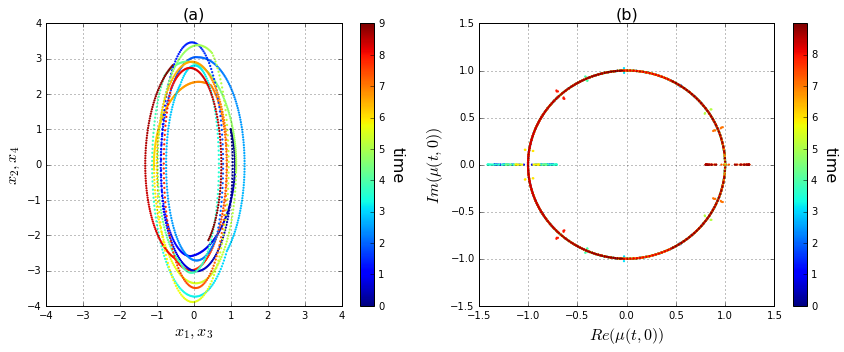}
	\caption{Exact solution starting with the inital condition (1,1,1,1) in the state space (a), and exact fundamental matrix eigenvalues (b) for the coupled oscillators with switching frequency (\ref{eq:hco3-matrix}).}
	\label{fig:hco3-01}
\end{figure}

\begin{figure}[!htb]
	\centering
	\includegraphics[width=0.9\linewidth]{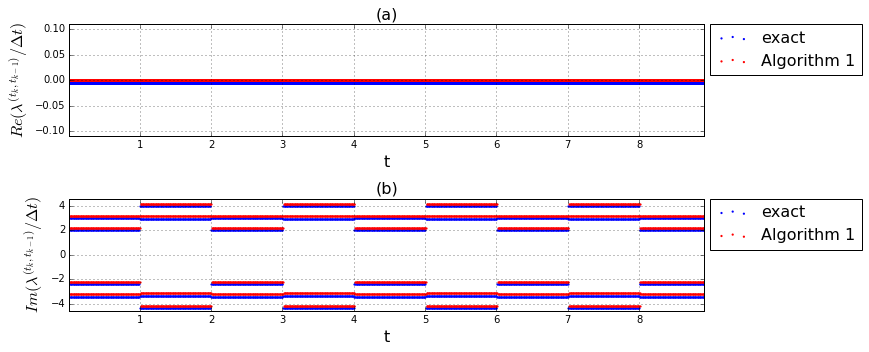}
	\caption{Dynamical system matrix eigenvalues for the coupled oscillators with switching frequency (\ref{eq:hco3-matrix}), computed with  Algorithm 1. (Exact values are offset to improve visibility.)}
	\label{fig:hco3-02}

	\includegraphics[width=0.9\linewidth]{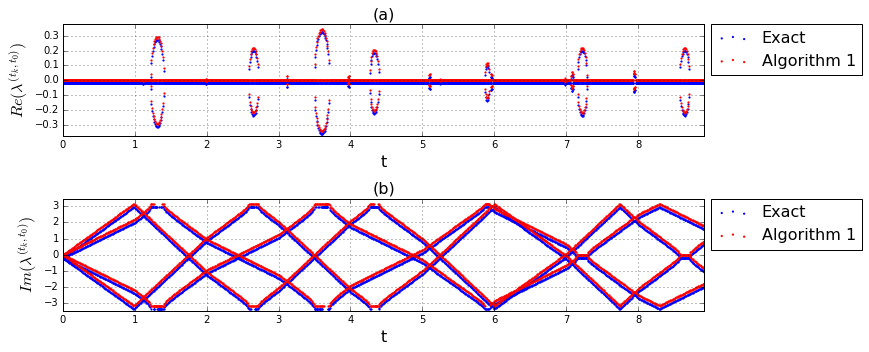}
	\caption{Koopman operator eigenvalues for the coupled oscillators with switching frequency (\ref{eq:hco3-matrix}), computed with  Algorithm 1. (Exact values are offset to improve visibility.)}
	\label{fig:hco3-03}
\end{figure}

\subsection{Multicompartment model with delay}\label{ss:mcm}

Multicompartment models are often used in medicine and pharmacoterapy.  These are models of the form 
\begin{equation}\label{eq:mcm_ode}
\dot{x}_{i} = - \sum_{j=1, j\ne i}^{n} k_{ij} x_i +  \sum_{j=1, j\ne i}^{n} k_{ji} x_j \quad \mbox{ for }i=1,...,n.
\end{equation}
Here $x_i=x_i(t)$ is the concentration of a substance in the $i^{th}$ compartment, $i=1,...,n$, and $k_{ij}$, is the rate coefficient of the transport of the substance from the $i^{th}$ to the $j^{th}$ compartment, $i,j=1,...,n$, $i\ne j$. 
As usual, concentrations are expressed as relative, i.e. as fractions of the sum of the initial concentrations. In the closed case (\ref{eq:mcm_ode}) the sum of the concentrations is constant, i.e. 
\begin{equation}\label{eq:mcm_sum}
\sum_{i=1}^{n}{x}_{i} = 1.
\end{equation}

If the transfer between two compartments starts only after some delay time, this can be modeled by using time-dependent coefficients of the form
\begin{equation}\label{eq:mcm_k}
k_{ij}(t)=\left\{
\begin{array}{ll}
0      &\mbox{ if } t<T_{ij}\\
K_{ij} &\mbox{ if } t\ge T_{ij} 
\end{array}
\right.
\end{equation}
where $T_{ij}$ is the delay time $i,j=1,...,n$, $i\ne j$. In such case the model is a hybrid linear non-autonomous system. 

We consider the multicompartment model for endosomal trafficking of $eL^d$ molecules \cite{mahmutefendic2017late}. In  that paper among other results, an application of a five compartment model was presented, where the non-zero rate coefficients and delay times were obtained that minimized the difference between measurements and simulation (Table \ref{tbl:mcm_kt}).

\begin{table}[!htb]
\centering
\caption{Example \ref{ss:mcm}, rate coefficients and delay times}
\label{tbl:mcm_kt}
\begin{tabular}{|c|c|c|}
	\hline
	(i,j) & $K_{ij}$ & $T_{ij}$ \\
	\hline
	(1,2)    & 0.0988 & 0 \\
	(2,1)    & 0.1410 & 5 \\
	(2,3)    & 0.0590 & 3 \\
	(3,4)    & 0.1150 & 18 \\
	(4,1)    & 0.0149 & 30 \\
	(4,5)    & 0.0154 & 55 \\
	\hline
\end{tabular}
\end{table}

In the computations we must take care of the fact that the number of independent observables is not equal to the state dimension for two reasons. The first reason is (\ref{eq:mcm_sum}) so concentration in the $5^{th}$ compartment can be eliminated from the observable set. The second reason is that zero values of the rate coefficients may cause that some compartments are completely inactive for some time. So again, those concentrations should not be included in the set of observables. The identification of the necessary number of observables can be easily achieved by checking the dimension of the Krylov subspace. 

With this addition to the Algorithm 1, we obtain excellent results as it can be seen in Fig.\ref{fig:mcm-02} and Fig.\ref{fig:mcm-03}. The eigenvalues of the underlying matrix and the time delays are correctly identified (Fig.\ref{fig:mcm-02}). In Fig.\ref{fig:mcm-03}(b) we see that the imaginary part of the time-dependent Koopman eigenvalue is zero, so there are no oscillations in this system. In Fig.\ref{fig:mcm-03}(a) real part of one of the Koopman eigenvalues is zero, which is consistent with the fact that the sum of all concentrations is constant in time (\ref{eq:mcm_sum}). Other time-dependent real parts of Koopman eigenvalues are negative and decreasing, so related particular solutions of the governing equations are vanishing as time increases. This is consistent with the behavior of the solution, since it is obviously converging to a steady state (Fig.\ref{fig:mcm-01}).

\begin{figure}[!htb]
	\centering
	\includegraphics[width=0.9\linewidth]{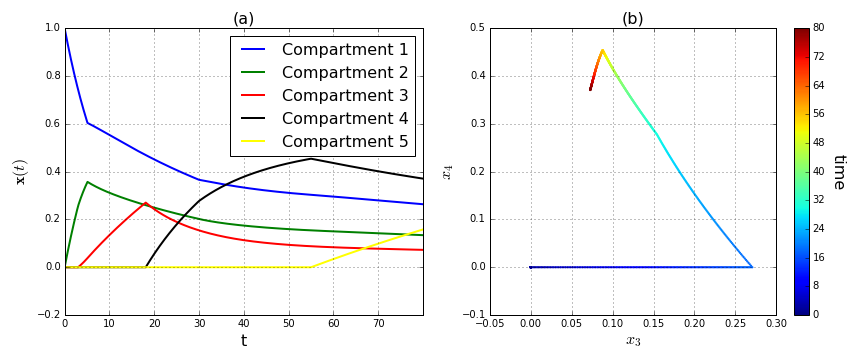}
	\caption{Exact solution starting with the inital condition (1,0,0,0,0) (a), and exact compartment concentrations in state space (b) for the multicompartment model with delay (\ref{eq:mcm_ode}).}
	\label{fig:mcm-01}
\end{figure}

\begin{figure}[!htb]
	\centering
	\includegraphics[width=0.9\linewidth]{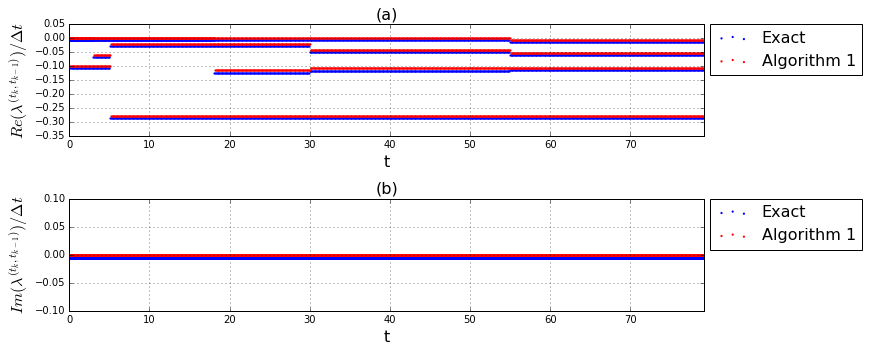}
	\caption{Dynamical system matrix eigenvalues for the multicompartment model with delay (\ref{eq:mcm_ode}), computed with Algorithm 1. (Exact values are offset to improve visibility.)}
	\label{fig:mcm-02}

	\includegraphics[width=0.9\linewidth]{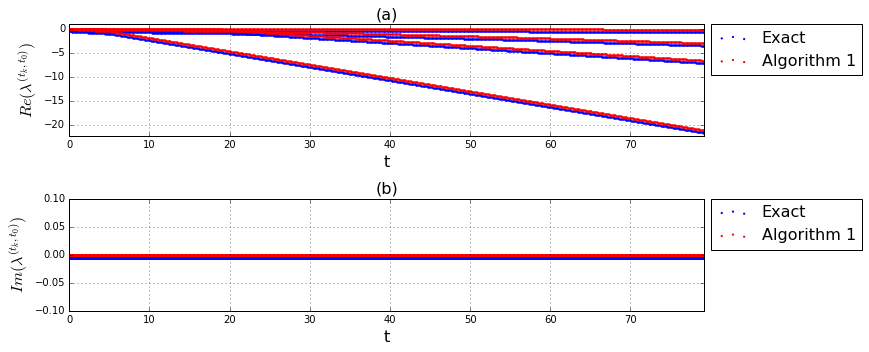}
	\caption{Koopman operator eigenvalues for the multicompartment model with delay (\ref{eq:mcm_ode}), computed with Algorithm 1. (Exact values are offset to improve visibility.)}
	\label{fig:mcm-03}
\end{figure}

\subsection{Continuous frequency change}\label{ss:c2f}

In this example we consider an oscillator with continously changing frequency. The governing equations are (\ref{eq:LNA-system}) with the underlying matrix (\ref{eq:LNA-oscillation}) where we additionally set $\sigma(t)=0$ and 
\begin{equation}\label{eq:c2f}
\omega(t)=\omega_0+A_d\cos(\omega_d t)+B_d\sin(\omega_d t)
\end{equation}
We can solve it analytically using fundamental matrix (\ref{eq:osci-fmatrix}) with $\alpha(t,t_0)=0$ and
$$ \beta(t,t_0)=\omega_0(t-t_0)+ 
 \frac{A_d}{\omega_d}\left(\sin(\omega_d t) - \sin(\omega_d t_0)\right) -
   \frac{B_d}{\omega_d}\left(\cos(\omega_d t) - \cos(\omega_d t_0)\right)$$

The computations are performed for $\omega_0=2$, $\omega_d=\pi$, and $A_d=0.5$.

Exact solution (Fig.\ref{fig:c2f-01}) shows that this continuous change in frequency of the underlying matrix does not produce instabilities. All eigenvalues are on the unit circle and there are no damping-driving effects.
This is confirmed by the results obtained using Algorithm 2. In Fig.\ref{fig:c2f-02}(a) and Fig.\ref{fig:c2f-03}(a) we see that real parts of both dynamical system matrix and Koopman operator eigenvalues stay equal to zero at all times. Imaginary parts of dynamical system matrix eigenvalues (Fig.\ref{fig:c2f-02}(b)) and Koopman operator eigenvalues (Fig.\ref{fig:c2f-03}(b)) computed with Algorithm 2 show the correct time-dependency. 

However, when computations are performed with any Arnoldi-like method on moving stencils, numerically evaluated real part of the eigenvalue of the underlying matrix is displaying a nonexistent time-dependency (Fig.\ref{fig:c2f-02-err}(a)), while the imaginary parts of those eigenvalues are correct (Fig.\ref{fig:c2f-02-err}(a)). As proven in Theorem \ref{thm:NLA-Aerror} the numerical result for the imaginary part of the dynamical system matrix eigenvalues are correct because there is no time change in $\sigma$. Also, as proven in  Theorem \ref{thm:NLA-Aerror} numerical results for the real part of the dynamical system matrix eigenvalues are compromised by the error which is proportional to the time derivative of $\omega(t)$. 

This error then propagates into the Koopman operator eigenvalue computations (Fig. \ref{fig:c2f-03-err}(a) and (b)). From those results it might be concluded that there is an amplitude change in the system, and that even at some time moments frequencies stay at the value $\pm \pi$ (Fig.\ref{fig:c2f-03-err}(b)) and then real parts of eigenvalues split into two different values (Fig.\ref{fig:c2f-03-err}(a)). All of this is completely erroneous, as it is additionally confirmed by the algorithm error plotted in Fig.\ref{fig:c2f-03-err}(c).

\begin{figure}[!htb]
	\centering
	\includegraphics[width=0.9\linewidth]{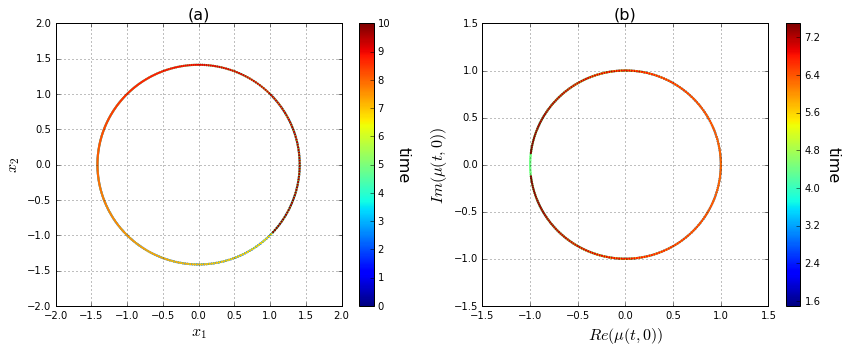}
	\caption{Exact solution starting with the inital condition (1,1) in the state space (a), and exact fundamental matrix eigenvalues (b) for the dynamical system with continuous frequency change (\ref{eq:c2f}).}
	\label{fig:c2f-01}
\end{figure}

\begin{figure}[!htb]
	\centering
	\includegraphics[width=0.9\linewidth]{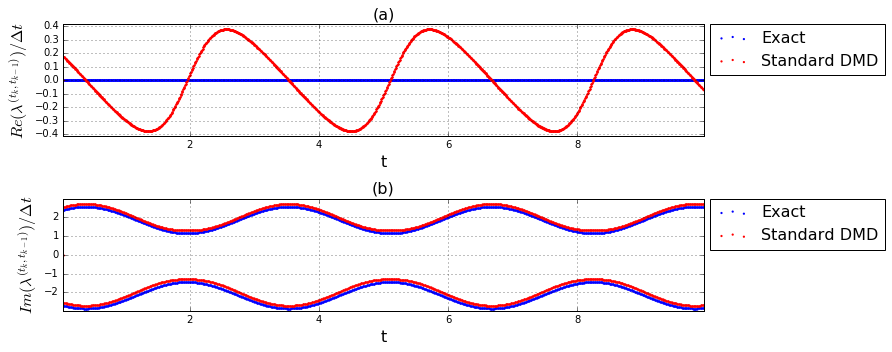}
	\caption{Dynamical system matrix eigenvalues for the dynamical system with continuous frequency change (\ref{eq:c2f}), computed with standard DMD.}
	\label{fig:c2f-02-err}

	\includegraphics[width=0.9\linewidth]{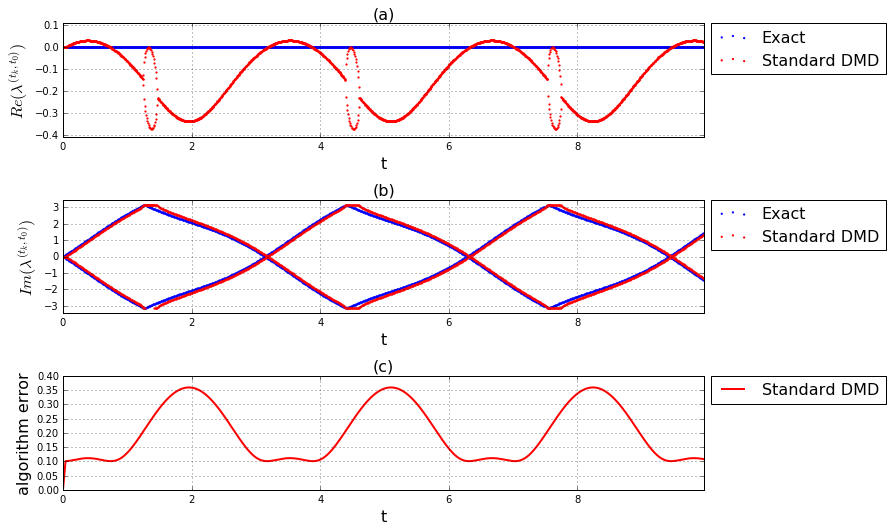}
	\caption{Koopman operator eigenvalues  and the algorithm error (\ref{eq:data-alg-err}) for the dynamical system with continuous frequency change (\ref{eq:c2f}), computed with standard DMD.}
	\label{fig:c2f-03-err}
\end{figure}

\begin{figure}[!htb]
	\centering
	\includegraphics[width=0.9\linewidth]{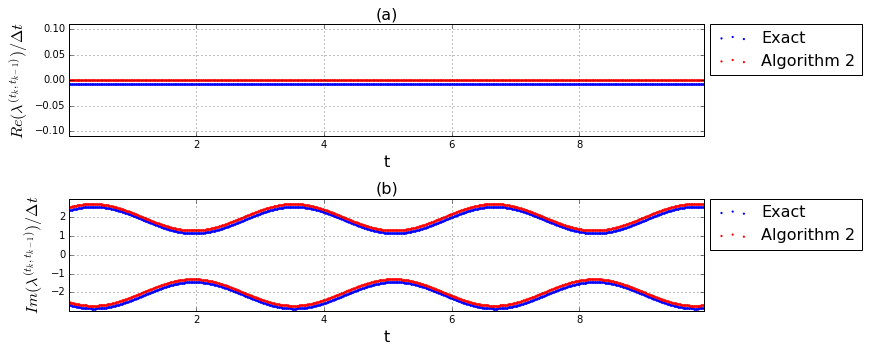}
	\caption{Dynamical system matrix eigenvalues for the dynamical system with continuous frequency change (\ref{eq:c2f}), computed with Algorithm 2. (Exact values are offset to improve visibility.)}
	\label{fig:c2f-02}

	\includegraphics[width=0.9\linewidth]{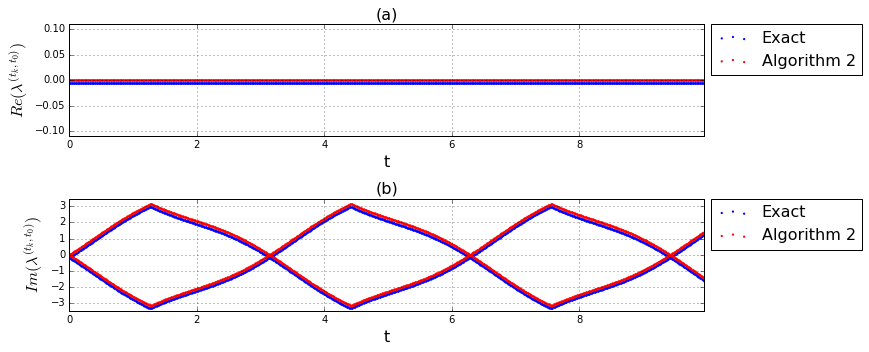}
	\caption{Koopman operator eigenvalues for the dynamical system with continuous frequency change (\ref{eq:c2f}), computed with Algorithm 2. (Exact values are offset to improve visibility.)}
	\label{fig:c2f-03}
\end{figure}

\subsection{Continuous change in damping rate}\label{ss:cdd}

Here we consider an oscillator with a continously changing amplitude. It is a linear non-autonomous system (\ref{eq:LNA-system}) with underlying matrix (\ref{eq:LNA-oscillation}) where we additionally set $\omega(t)=\omega_0$ and 
\begin{equation}\label{eq:cdd}
 \sigma(t)=\sigma_0+A_d\cos(\omega_d t)+B_d\sin(\omega_d t) 
\end{equation}
We can solve it analytically using fundamental matrix (\ref{eq:osci-fmatrix}) with $\beta(t,t_0)=0$ and
$$ \alpha(t,t_0)=\sigma_0(t-t_0)+ 
 \frac{A_d}{\omega_d}\left(\sin(\omega_d t) - \sin(\omega_d t_0)\right) -
\frac{B_d}{\omega_d}\left(\cos(\omega_d t) - \cos(\omega_d t_0)\right)$$

The computations are performed for  $\sigma_0=0$, $\omega_0=2$, $\omega_d=\pi$, and $A_d=0.5$. 
Since amplitude is changing the solution appears non-symmetrical relative to axis in the state space (Fig.\ref{fig:cdd-01}(a)), and this change is confirmed in Fig.\ref{fig:cdd-01}(b). 

If the computations on snapshots are performed with any Arnoldi-like method on moving stencils we get results as in Fig.\ref{fig:cdd-02-err} and Fig.\ref{fig:cdd-03-err}. As proven in Theorem \ref{thm:NLA-Aerror} since $\omega$ is constant in time, there is no error in numerical real parts of the dynamical system matrix eigenvalues (Fig.\ref{fig:cdd-02-err}(a)). Also, the error in the numerical imaginary part of the dynamical system matrix eigenvalues is, as explained in the same theorem, proportional to the time derivative of $\sigma(t)$ (Fig.\ref{fig:cdd-02-err}(b)). This causes error in numerical time-dependent Koopman operator eigenvalues (Fig.\ref{fig:cdd-03-err}(a) and (b)), and produces large algorithm error (Fig.\ref{fig:cdd-03-err}(c)).

All these issues can be eliminated by applying Algorithm 2, which gives highly accurate results, as we show in Fig.\ref{fig:cdd-02} and Fig.\ref{fig:cdd-03}.

\begin{figure}[!htb]
	\centering
	\includegraphics[width=0.9\linewidth]{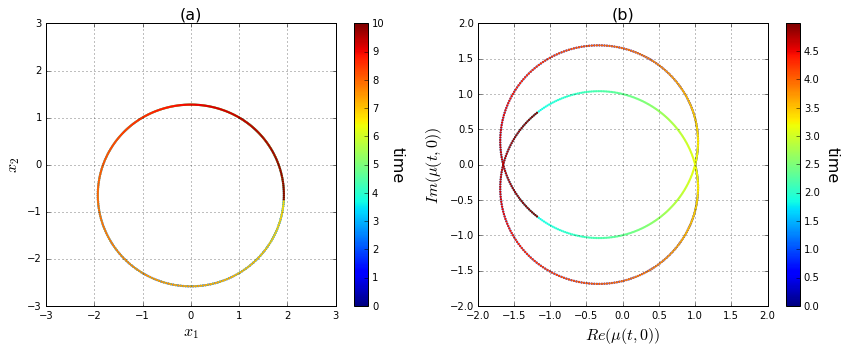}
	\caption{Exact solution starting with the inital condition (1,1) in the state space (a), and exact fundamental matrix eigenvalues (b) for the dynamical system with continuous change in damping rate (\ref{eq:cdd}).}
	\label{fig:cdd-01}
\end{figure}

\begin{figure}[!htb]
	\centering
	\includegraphics[width=0.9\linewidth]{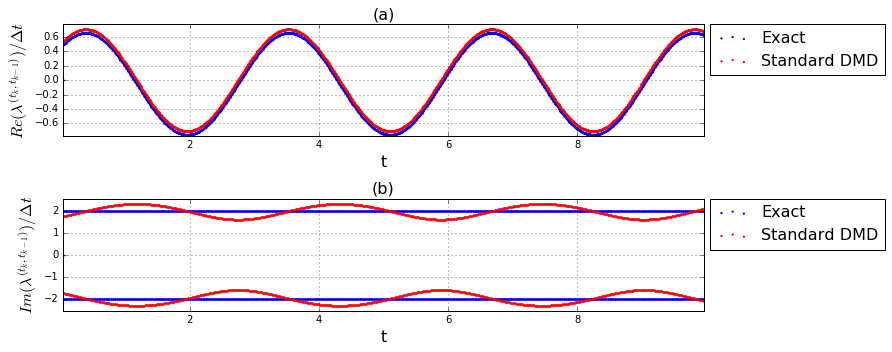}
	\caption{dynamical system matrix eigenvalues for the dynamical system with continuous change in damping rate (\ref{eq:cdd}), computed with standard DMD.}
	\label{fig:cdd-02-err}

	\includegraphics[width=0.9\linewidth]{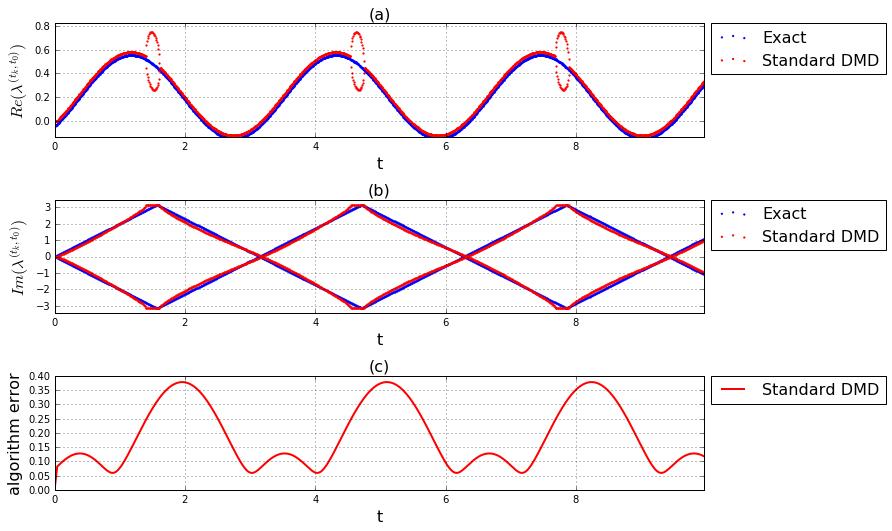}
	\caption{Koopman operator eigenvalues  and the algorithm error (\ref{eq:data-alg-err}) for the dynamical system with continuous change in damping rate (\ref{eq:cdd}), computed with standard DMD.}
	\label{fig:cdd-03-err}
\end{figure}

\begin{figure}[!htb]
	\centering
	\includegraphics[width=0.9\linewidth]{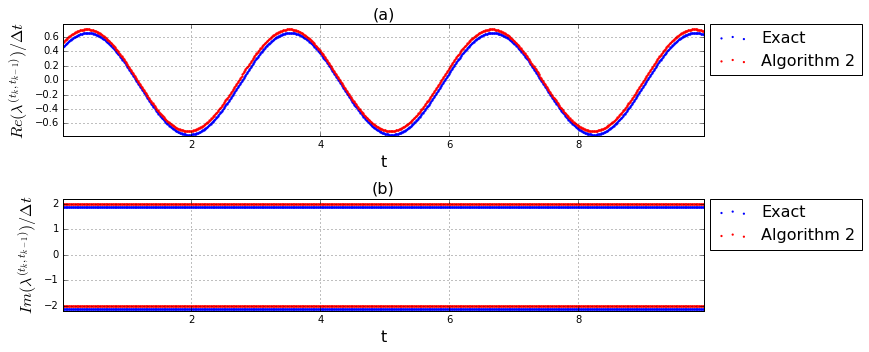}
	\caption{Dynamical system matrix eigenvalues for the dynamical system with continuous change in damping rate (\ref{eq:cdd}), computed with Algorithm 2. (Exact values are offset to improve visibility.)}
	\label{fig:cdd-02}

	\includegraphics[width=0.9\linewidth]{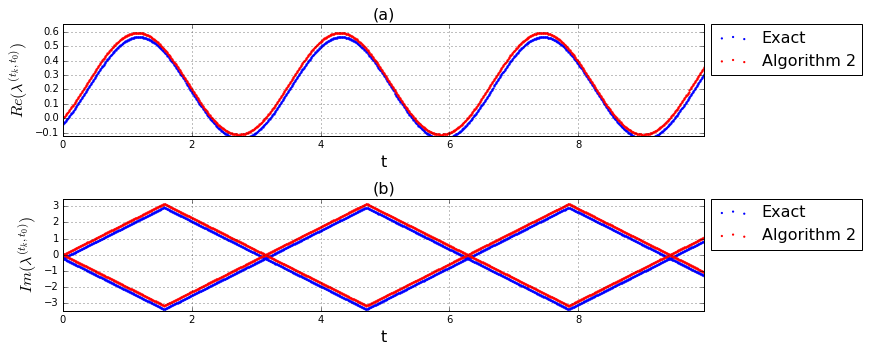}
	\caption{Koopman operator eigenvalues for the dynamical system with continuous change in damping rate (\ref{eq:cdd}), computed with Algorithm 2. (Exact values are offset to improve visibility.)}
	\label{fig:cdd-03}
\end{figure}

\subsection{Nonautonomous coupled oscillators}\label{ss:cco3}

Let us consider two coupled oscillators (similar to Section \ref{ss:hco3}), but now both with continously changing frequencies. 
In order to solve such a system analytically we write it in the equivalent form (see \cite{Veselic})
with underlying matrix
\begin{equation}\label{eq:cco3-matrix}
\mathbf{A}(t) = 
\left(\begin{array}{cccc}
0 & 0 & \omega_1(t) & 0 \\ 
0 & 0 & 0 & \omega_2(t) \\ 
-\omega_1(t) & 0 & 0 & 0 \\ 
0 & -\omega_2(t) & 0 & 0  
\end{array} \right).   
\end{equation}
Here $\omega_j(t)=\sqrt{\nu_j(t)}$, $j=1,2$, and $\nu_j(t)$, $j=1,2$ are computed from the generalized eigenvalue problem (\ref{eq:co3-geneig-1})-(\ref{eq:co3-geneig-2}) for each $t>t_0$.

This form is commutative, so we can compute the fundamental matrix

\begin{equation}\label{eq:cco3-fmatrix}
\mathcal{M}(t,t_0)=
\left(\begin{array}{cccc}
 \cos\beta_1 & 0 & \sin\beta_1 & 0 \\ 
 0 & \cos\beta_2 & 0 & \sin\beta_2 \\ 
-\sin\beta_1 & 0 & \cos\beta_1 & 0 \\
0 & -\sin\beta_2 & 0 & \cos\beta_2 
\end{array} \right),   
\end{equation}
where
\begin{equation}\label{eq:cco3-ab}
\beta_j=\beta_j(t,t_0)=\int_{t_0}^{t}\omega_j(\tau)d\tau, j=1,2.
\end{equation}  

This is a four dimensional test example with all variables strongly coupled. 

In the computations we take $m_1=m_2=1$, $k_1=2$, $k_2=1$, $k_3=3$, and we compute constant part of the frequency $\omega_{1,2}^{(0)}=\sqrt{(7\pm\sqrt{5})/2}$, from (\ref{eq:co3-geneig-1})-(\ref{eq:co3-geneig-2}). Then we add a frequency forcing term 
\begin{equation}\label{eq:cco3-fast}
\omega_1(t)=\omega_1^{(0)}+\frac{1}{2}\left(\cos(2 t)+\sin(2 t)\right)
\end{equation}
to one oscillator, and another frequency forcing term
\begin{equation}\label{eq:cco3-slow}
\omega_2(t)=\omega_2^{(0)}+\frac{1}{2}\left(\cos(0.4 t)+\sin(0.4 t)\right)
\end{equation}
to the other oscillator. Obeserve that from the perspective of the Theorem \ref{thm:NLA-Aerror} (\ref{eq:cco3-fast}) will produce larger error then (\ref{eq:cco3-slow}).

In Fig.\ref{fig:cco3-01}(a) solution pairs $(x_1,x_3)$ and $(x_2,x_4)$ are plotted. 

This example is important since it is higher dimensional and the underlying matrix is not of the form examined in Theorem \ref{thm:NLA-Aerror}. We want to see if something similar to what is proven in that theorem will occur, and test if Algorithm 2 with appropriate choice of observables gives good results.

First, we compute the dynamical system matrix and Koopman operator eigenvalues with Standard DMD on moving stencils and obtain results presented in Fig.\ref{fig:cco3-02-err} and Fig.\ref{fig:cco3-03-err}. As expected, the exact real part of the eigenvalues should be zero, but the numerical real parts of eigenvalues exhibit an error (Fig.\ref{fig:cco3-02-err}(a)). 
The only part that is almost accurately computed is the part related to the the slower frequency forcing term (\ref{eq:cco3-slow}) (Fig.\ref{fig:cco3-02-err}(b)).But what we observe in Fig.\ref{fig:cco3-02-err}(b) is that errors contributed to imaginary eigenvalues are not only by real parts (that are zero here) but also by imaginary part of the other oscillator.
This indicates, that the error proven in Theorem \ref{thm:NLA-Aerror}, escalates with the complexity of the system, which is quite logical. 

Now in order to apply Algorithm 2, in step 2 we define new observables 
\begin{equation}\label{eq:cco3-good-obs-1}
u_1 = \sqrt{x_1^2+x_3^2}, 
u_2 = (x_1+ix_3)/u_1, 
\end{equation} 
\begin{equation}\label{eq:cco3-good-obs-2}
u_3 = \sqrt{x_2^2+x_4^2}, 
u_4 = (x_2+ix_4)/u_4, 
\end{equation} 
Notice that the observable pair $(u_1,u_2)$ is obtained with a nonlinear conjugate transformation of form (\ref{eq:ConjMap}) on state variables $(x_1,x_3)$, and the same is valid for observable pair $(u_3,u_4)$ and state variables $(x_2,x_4)$. 
Comparison between exact eigenvalues and numerical eigenvalues obtained with Algorithm 2 (Fig.\ref{fig:cco3-02} and Fig.\ref{fig:cco3-03}), one more time shows that the issues disappear and that Algorithm 2 gives highly accurate results.

\begin{figure}[!htb]
	\centering
	\includegraphics[width=0.9\linewidth]{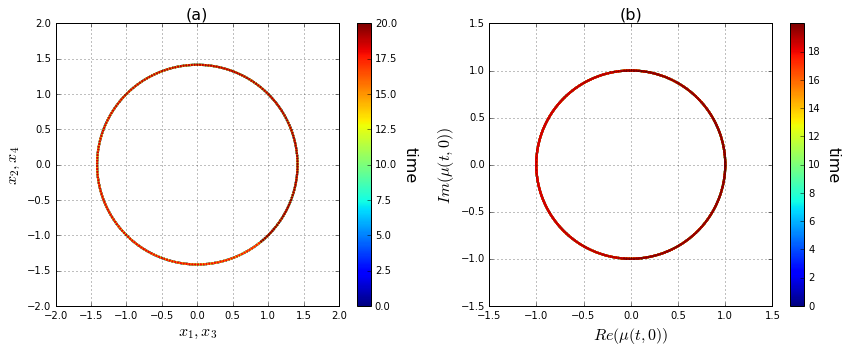}
	\caption{Exact solution starting with the inital condition (1,1,1,1) in the state space (a), and exact fundamental matrix eigenvalues (b) for the coupled oscillators with continuous frequency change (\ref{eq:cco3-matrix}).}
	\label{fig:cco3-01}
\end{figure}

\begin{figure}[!htb]
	\centering
	\includegraphics[width=0.9\linewidth]{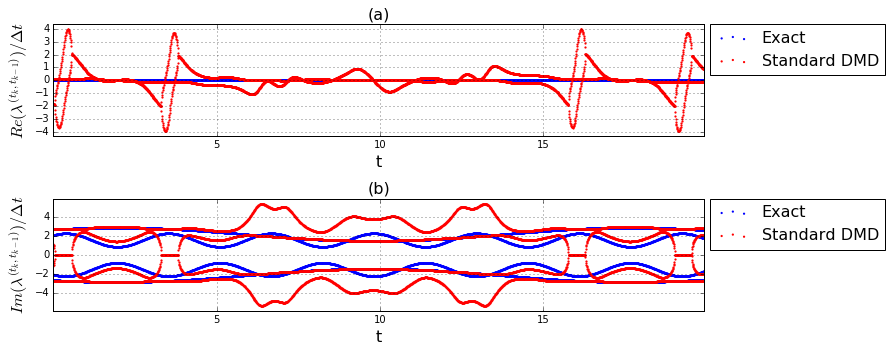}
	\caption{Dynamical system matrix eigenvalues for the coupled oscillators with continuous frequency change (\ref{eq:cco3-matrix}), computed with standard DMD.}
	\label{fig:cco3-02-err}

	\includegraphics[width=0.9\linewidth]{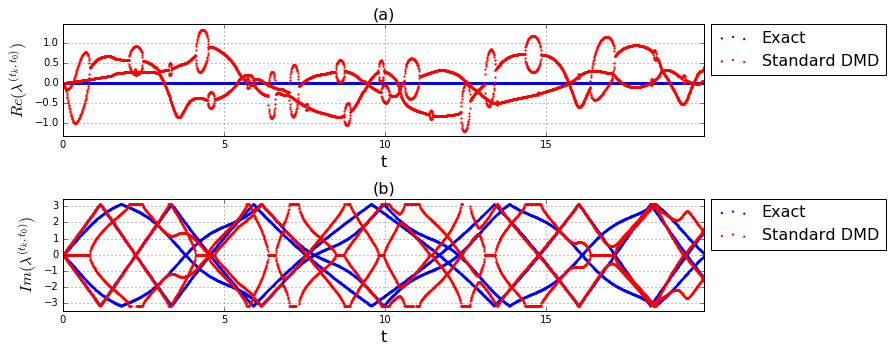}
	\caption{Koopman operator eigenvalues for the coupled oscillators with continuous frequency change (\ref{eq:cco3-matrix}), computed with standard DMD.}
	\label{fig:cco3-03-err}
\end{figure}

\begin{figure}[!htb]
	\centering
	\includegraphics[width=0.9\linewidth]{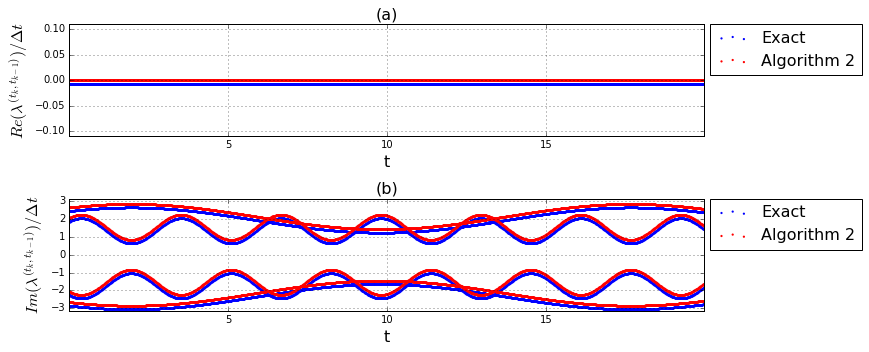}
	\caption{Dynamical system matrix eigenvalues for the coupled oscillators with continuous frequency change (\ref{eq:cco3-matrix}), computed with Algorithm 2. (Exact values are offset to improve visibility.)}
	\label{fig:cco3-02}

	\includegraphics[width=0.9\linewidth]{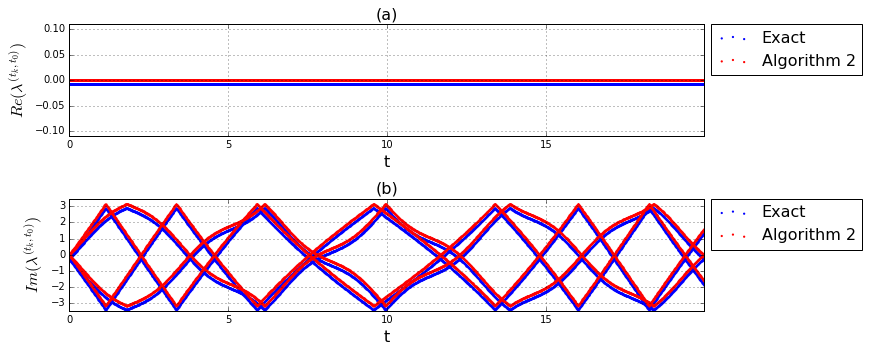}
	\caption{Koopman operator eigenvalues for the coupled oscillators with continuous frequency change (\ref{eq:cco3-matrix}), computed with Algorithm 2. (Exact values are offset to improve visibility.)}
	\label{fig:cco3-03}
\end{figure}

\section{Conclusion}

In this paper we prove a close connection between Koopman operator family for linear non-autonomous dynamical systems and the fundamental matrix family for the underlying system of governing equations. This results in a finite dimensional Koopman expansion for the full state observable.
Actually, if a new set of observables is introduced, which after transformation also satisfy governing equations of form (\ref{eq:LNA-system}) again the same Theorem \ref{thm:kopp-fmatrix} can be applied.

Then we analyze data-driven algorithms on hybrid linear non-autonomous systems. What we discover is that the increase in Krylov subspace projection error signals the time moments, when switching of the values in the underlying matrix occurs. An appropriate use of this information leads us to a full detection of the governing equations. This approach is formalized as the Algorithm 1.

Another important result is the revealed nature of the error that occurs when Arnoldi-like methods are used for approximation of time-dependent underlying matrices. The error in these approximations is proportional to time derivatives of the eigenvalues. It is obvious that at the core of this issue lies the fact that Arnoldi-like methods are constructed to capture eigenvalues of constant high-dimensional matrices. Particularly significant is the fact that this error does not vanish if we decrease the time step between snapshots.  

In order to solve this issue for the continuously time-dependent nonautonomous systems we propose Algorithm 2. The essence of this algorithm is that we must introduce observables containing only one real or one imaginary part of the eigenvalues, so that stencil with only two snapshots contains enough information to reconstruct that value. Arnoldi-like methods that expect the underlying matrix is constant, will, on any larger stencil span, produce significant error. One good path for the definition of appropriate observables is through nonlinear conjugate transformations of the form (\ref{eq:ConjMap}) which might lead to discovery of new, adaptive algorithms for observable selection.

All numerical results show that application of Arnoldi-like methods on moving stencils to data collected on non-autonomous systems, will lead to wrong conclusions about the nature of the dynamical system. Also, all numerical tests confirm high accuracy of both algorithms that are proposed in this paper.

\section{Acknowledgment}

This research has been supported by the DARPA Contract HR0011-16-C-0116 "On A Data-Driven,
Operator-Theoretic Framework for Space-Time Analysis of Process Dynamics". S.M. is grateful to Dr. Ryan Mohr and Dr. Maria Fonoberova for helpful mathematical discussions and comments on the manuscript.

\bibliographystyle{plain}
\bibliography{bibliography}

\end{document}